\numberwithin{equation}{section}
\newtheorem{theorem}{Theorem}[section]
\newtheorem{proposition}{Proposition}[section]
\newtheorem{corollary}{Corollary}[section]
\newtheorem{lemma}{Lemma}[section]
\newtheorem{remark}{Remark}[section]
\newtheorem{example}{Example}[section]
\newcommand{\OMIT}[1]{{\bf [OMIT:} #1 \ {\bf --- end OMIT] }}  
   \renewcommand{\OMIT}[1]{}            
\newcommand{\RR}{{\mathbb{R}}}
\newcommand{\ZZ}{{\mathbb{Z}}}
\newcommand{\veczero}{{\bf 0}}
\newcommand{\dom}{{\rm dom\,}}
\newcommand{\domZ}{{\rm dom_{\ZZ}}}
\newcommand{\calF}{\mathcal{F}}
\newcommand{\subgR}{\partial_{\RR}}
\newcommand{\subgZ}{\partial_{\ZZ}}
\newcommand{\Lnat}{{L$^{\natural}$}}
\newcommand{\Mnat}{{M$^{\natural}$}}
\newcommand{\kwd}[1]{{\em #1\/}}  
\newcommand{\finbox}{\hspace*{\fill}$\rule{0.2cm}{0.2cm}$}
\newcommand{\todaye}{\the\year/\the\month/\the\day}
\begin{document}

\title{Integrality of Subgradients and Biconjugates of \\
Integrally Convex Functions%
}

\author{
Kazuo Murota%
\thanks{Department of Economics and Business Administration,
Tokyo Metropolitan University, 
murota@tmu.ac.jp}
\ and 
Akihisa Tamura%
\thanks{Department of Mathematics, Keio University, 
aki-tamura@math.keio.ac.jp}
}

\date{June 2018; September 2018}

\maketitle

\begin{abstract}
Integrally convex functions constitute a fundamental function class
in discrete convex analysis.
This paper shows that an integer-valued integrally convex function
admits an integral subgradient and that the integral biconjugate 
of an integer-valued integrally convex function coincides with itself.
The proof is based on the Fourier--Motzkin elimination.
The latter result provides a unified proof of integral biconjugacy
for various classes of integer-valued discrete convex functions, 
including L-convex, M-convex, L$_{2}$-convex, M$_{2}$-convex,
BS-convex, and UJ-convex functions as well as multimodular functions.
Our results of integral subdifferentiability and integral biconjugacy
make it possible to extend the theory of discrete DC (difference of convex) functions 
developed for L- and M-convex functions to that for integrally convex functions,
including an analogue of the Toland--Singer duality for integrally convex functions.
\end{abstract}

{\bf Keywords}:
Discrete convex analysis,  Integrally convex function, 
Subgradient, Biconjugate function, Integrality,
Fourier-Motzkin elimination


\section{Introduction}
\label{SCintro}

In discrete convex analysis \cite{Fuj05book,Mdca98,Mdcasiam},
a variety of discrete convex functions are considered.
Among others, integrally convex functions,
due to Favati--Tardella \cite{FT90},
constitute a common framework for discrete convex functions,
and almost all kinds of discrete convex functions 
are known to be integrally convex. 
Indeed, separable convex,
{\rm L}-convex, ${\rm L}^{\natural}$-convex, {\rm M}-convex,  
${\rm M}^{\natural}$-convex,  ${\rm L}^{\natural}_{2}$-convex, and 
${\rm M}^{\natural}_{2}$-convex functions are known to be integrally convex \cite{Mdcasiam}.
Multimodular functions \cite{Haj85} 
are also integrally convex, 
as pointed out in \cite{Mdcaprimer07}.
Moreover, BS-convex and UJ-convex functions \cite{Fuj14bisubmdc}
are integrally convex.

The concept of integral convexity
is used in formulating discrete fixed point theorems
and found applications in economics and game theory
\cite{IMT05,Mdcaeco16,Yan09fixpt}.
A proximity theorem for integrally convex functions
has recently been established in \cite{MMTT17proxIC} 
together with a proximity-scaling algorithm for minimization.
Fundamental operations for integrally convex functions
such as projection and convolution are investigated in \cite{MM17projcnvl}.

In this paper we are concerned with subgradients and biconjugates
of integer-valued integrally convex functions.
For a function $f: \ZZ\sp{n} \to \RR \cup \{ +\infty \}$
we denote its effective domain as 
$\domZ f = \{ x \in \ZZ\sp{n} \mid f(x) < +\infty \}$;
we always assume that $\domZ f$ is nonempty.
For an integer-valued function 
$f: \ZZ\sp{n} \to \ZZ \cup \{ +\infty \}$,
we define
$f\sp{\bullet}: \ZZ\sp{n} \to \ZZ \cup \{ +\infty \}$ 
by
\begin{align}
f\sp{\bullet}(p)  &= \sup\{  \langle p, x \rangle - f(x)   \mid x \in \ZZ\sp{n} \}
\qquad ( p \in \ZZ\sp{n}),
 \label{conjvexZpZ} 
\end{align}
where 
$\langle p, x \rangle = \sum_{i=1}\sp{n} p_{i} x_{i}$
is the inner product of 
$p=(p_{1}, p_{2}, \ldots, p_{n})$ and 
$x=(x_{1}, x_{2}, \allowbreak \ldots, \allowbreak  x_{n})$.
This function $f\sp{\bullet}$ is referred to as the  
\kwd{integral conjugate} of $f$.
We can apply (\ref{conjvexZpZ}) twice to obtain
$f\sp{\bullet\bullet} = (f\sp{\bullet})\sp{\bullet}$,
which is called the \kwd{integral biconjugate} of $f$.

Concerning conjugacy and biconjugacy 
it is natural to ask the following questions
for a given class of discrete convex functions.

\begin{itemize}
\item
For an integer-valued function $f$ in the class,
does the integral conjugate $f\sp{\bullet}$
belong to the same class?
If not, how is it characterized?

\item
For an integer-valued function $f$ in the class,
does integral biconjugacy $f\sp{\bullet\bullet} =f$ hold?
\end{itemize}
These questions are completely settled for
separable convex,
{\rm L}-convex,
${\rm L}^{\natural}$-convex,
{\rm M}-convex,  
${\rm M}^{\natural}$-convex,  
${\rm L}^{\natural}_{2}$-convex, and 
${\rm M}^{\natural}_{2}$-convex functions;
see \cite[Chapter 8]{Mdcasiam}.
We may say that they are also settled for multimodular functions
via equivalence between ${\rm L}^{\natural}$-convexity and multimodularity
pointed out in \cite{Mmult05}.
The conjugacy question for BS-convex and UJ-convex functions
is addressed in \cite{Fuj14bisubmdc}.

For integrally convex functions,
the first question about conjugacy is already settled in the negative \cite{MS01rel}.
Indeed, there is an example of an integrally convex function
whose integral conjugate is not integrally convex;
see Remark \ref{RMconjIC} in Section \ref{SCintcnvfn}.
The main result of this paper is an affirmative answer to the second question
about biconjugacy, which is stated as Theorem~\ref{THbiconjIC} in Section \ref{SCbiconj}.

Integral biconjugacy is closely related to 
integral subgradients.
For a point $x \in \domZ f$, 
the \kwd{integral subdifferential} of $f$ at $x$
is defined as 
\begin{equation} \label{subgZZdef}
 \subgZ f(x)
= \{ p \in  \ZZ\sp{n} \mid    
  f(y) - f(x)  \geq  \langle p, y - x \rangle     \ \mbox{for all }  y \in \ZZ\sp{n} \} ,
\end{equation}
and an element of $\subgZ f(x)$ is called an 
\kwd{integral subgradient}
of $f$ at $x$.
It is known that
$f\sp{\bullet\bullet}(x) = f(x)$ if and only if $\subgZ f(x) \not= \emptyset$;
see Lemma \ref{LMbiconjsubg} in Section \ref{SCbiconj}.
The condition $\subgZ f(x) \not= \emptyset$ is sometimes referred to as
the \kwd{integral subdifferentiability}  of $f$ at $x$.
Our proof of the integral biconjugacy
actually consists in showing the integral subdifferentiability,
which is stated as Theorem~\ref{THsubgrIC} in Section \ref{SCsubr}.

We can name the following significances of the present result:

\begin{enumerate}
\item
Our result of integral biconjugacy for integrally convex functions serves as a
unified proof of integral biconjugacy
for various classes of discrete convex functions,
such as 
{\rm L}-convex, ${\rm L}^{\natural}$-convex, {\rm M}-convex,  ${\rm M}^{\natural}$-convex,  
${\rm L}^{\natural}_{2}$-convex, and ${\rm M}^{\natural}_{2}$-convex functions.
The existing proofs for these functions 
are based on conjugacy statements valid for respective function classes,
and as such, vary with function classes.
Our proof considers integral biconjugacy directly, 
without involving conjugacy properties
that depend on function classes.

\item
In addition to being a unified proof for known results,
our result reveals new facts that
integer-valued BS-convex and UJ-convex functions
admit integral subgradients and  enjoy integral biconjugacy
(Corollaries \ref{COsubgrBSUJ} and \ref{CObiconjBSUJ}).

\item
Our results imply that a theory of discrete DC functions can be developed for 
integrally convex functions.
In particular, an analogue of the Toland--Singer duality
for integrally convex functions can be established.
See Section \ref{SCdcprog} for details.
\end{enumerate}

This paper is organized as follows.
Section~\ref{SCintcnvfn} is a review of relevant results on
integrally convex functions.
Section~\ref{SCres} presents the main results of this paper,
followed by Section~\ref{SCproof} for the proofs. 
Section~\ref{SCconclrem} concludes the paper with some remarks.

\section{Integrally Convex Functions}
\label{SCintcnvfn}

In this section we summarize fundamental facts about integrally convex functions.
The reader is referred to \cite{FT90} and \cite[Section 3.4]{Mdcasiam} for backgrounds.

For integer vectors 
$a \in (\ZZ \cup \{ -\infty \})\sp{n}$ and 
$b \in (\ZZ \cup \{ +\infty \})\sp{n}$ with $a \leq b$,
$[a,b]_{\ZZ}$ denotes the integer interval 
(box, discrete rectangle)
between $a$ and $b$, i.e.,
$[a,b]_{\ZZ} = \{ x \in \ZZ\sp{n} \mid a \leq x \leq b \}$.
For $x \in \RR^{n}$ the integral neighborhood of $x$ is defined as 
\[
N(x) = \{ z \in \mathbb{Z}^{n} \mid | x_{i} - z_{i} | < 1 \ (i=1,\ldots,n)  \}.
\]
For a function
$f: \mathbb{Z}^{n} \to \mathbb{R} \cup \{ +\infty  \}$
the local convex extension 
$\tilde{f}: \RR^{n} \to \RR \cup \{ +\infty \}$
of $f$ is defined 
as the union of all convex envelopes of $f$ on $N(x)$.  That is,
\begin{equation} \label{fnconvclosureloc2}
 \tilde f(x) = 
  \min\{ \sum_{y \in N(x)} \lambda_{y} f(y) \mid
      \sum_{y \in N(x)} \lambda_{y} y = x,  \ 
  (\lambda_{y})  \in \Lambda(x) \}
\quad (x \in \RR^{n}) ,
\end{equation} 
where $\Lambda(x)$ denotes the set of coefficients for convex combinations indexed by $N(x)$:
\[ 
  \Lambda(x) = \{ (\lambda_{y} \mid y \in N(x) ) \mid 
      \sum_{y \in N(x)} \lambda_{y} = 1, 
      \lambda_{y} \geq 0 \ (\forall y \in N(x))  \} .
\] 
If $\tilde f$ is convex on $\RR^{n}$,
then $f$ is said to be {\em integrally convex} \cite{FT90,Mdcasiam}.

A set $S \subseteq \ZZ^{n}$ is said to be 
integrally convex if
the convex hull $\overline{S}$ of $S$ coincides with the union of the 
convex hulls of $S \cap N(x)$ over $x \in \RR^{n}$,
i.e., if, for any $x \in \RR^{n}$, 
$x \in \overline{S} $ implies $x \in  \overline{S \cap N(x)}$.
A set $S$ is integrally convex if and only if its indicator function
$\delta_{S}: \ZZ\sp{n} \to \{ 0, +\infty \}$
is an integrally convex function,
where the indicator function $\delta_{S}$ is defined by
$\delta_{S}(x)  =
   \left\{  \begin{array}{ll}
    0            &   (x \in S) ,      \\
   + \infty      &   (x \not\in S) . \\
                      \end{array}  \right.$ 
An integrally convex set $S$ is ``hole-free'' in the sense that 
\begin{equation}  \label{holefree}
S =  \overline{S} \cap \mathbb{Z}^{n}.
\end{equation}

In this paper we need the following property of integrally convex sets.

\begin{proposition}    \label{PRpolyhedICset}
The convex hull $\overline{S}$
of an integrally convex set $S \subseteq \ZZ\sp{n}$ is
an integer polyhedron.
Moreover, for any face $F$ of $\overline{S}$,
the smallest affine subspace containing $F$ 
is given as 
$\{ x + \sum_{k=1}\sp{h} c_{k} d^{(k)} \mid c_{1}, c_{2}, \ldots, c_{h} \in \RR \}$
for a point $x$ in $F$ and some direction vectors 
$d^{(k)} \in \{ -1,0,+1 \}\sp{n}$ $(k=1,2,\ldots, h)$.
\end{proposition}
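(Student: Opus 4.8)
The plan is to pin down the local structure of $\overline{S}$ cube by cube and then read off closedness, polyhedrality, integrality, and the direction vectors from it. For $z \in \ZZ\sp{n}$ let $C_{z} = [z, z+\mathbf{1}]_{\ZZ}$ be the $2\sp{n}$ corners of the unit cube $[z,z+\mathbf{1}]$, and put $Q_{z} = \overline{S \cap C_{z}}$, a lattice polytope contained in that cube. Working directly from the definition of integral convexity, I would first prove
\[
 \overline{S} \cap [z, z+\mathbf{1}] = Q_{z} \qquad (z \in \ZZ\sp{n}).
\]
The inclusion $\supseteq$ is immediate. For $\subseteq$, take $p \in \overline{S} \cap [z,z+\mathbf{1}]$ and note that every $w \in N(p)$ already lies in $C_{z}$: an integral coordinate $p_{i} \in \{z_{i}, z_{i}+1\}$ forces $w_{i} = p_{i}$, while a non-integral coordinate contributes the two neighbours inside the cube. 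Hence $p \in \overline{S \cap N(p)} \subseteq Q_{z}$ by integral convexity. It follows that $\overline{S} = \bigcup_{z} Q_{z}$ is a locally finite union of polytopes; in particular $\overline{S}$ is closed and its boundary is covered by facets of the $Q_{z}$.

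The step I expect to be the crux is upgrading this local description to global polyhedrality. Since the vertices of each $Q_{z}$ lie among the $0/1$ corners in $C_{z}$, the affine hull of any facet of $Q_{z}$ is spanned by differences of corner vectors, hence by vectors in $\{-1,0,+1\}\sp{n}$; consequently its outer normal belongs to the finite set $D$ of directions orthogonal to $n-1$ linearly independent $\{-1,0,+1\}$-vectors. A supporting hyperplane of $\overline{S}$ at a boundary point interior to a cube coincides with one of $Q_{z}$, so every facet of $\overline{S}$ has outer normal in $D$ (a facet contained in a wall $\{x_{i} \in \ZZ\}$ has a coordinate direction as normal, which also belongs to $D$). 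Because at most one face of a convex set maximises a given linear functional, distinct facets carry distinct normals, so $\overline{S}$ has at most $|D|$ facets and is the intersection of the corresponding finitely many half-spaces, i.e.\ a polyhedron. The delicate points I would have to be careful about are that the piecewise-flat boundary is genuinely exhausted by these finitely many hyperplanes rather than accumulating, and the treatment of a lineality space $L$; the latter I would split off, noting that $L$ is the common affine direction of the minimal faces and is itself spanned by $\{-1,0,+1\}$-vectors, and then argue on $\overline{S} \cap L\sp{\perp}$.

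Granting polyhedrality, the two remaining assertions are short. For integrality it suffices that every minimal face $F_{0}$ contains an integer point: for any $x \in F_{0}$, integral convexity writes $x = \sum_{y} \lambda_{y} y$ as a convex combination, with positive weights, of points $y \in S \cap N(x) \subseteq \ZZ\sp{n}$, and since $x$ lies in the relative interior of the face $F_{0}$ the face property forces every such $y$ into $F_{0}$; thus $F_{0} \cap \ZZ\sp{n} \neq \emptyset$. For the direction-vector claim I would take a point $x_{0}$ in the relative interior of a face $F$ whose coordinates are non-integral in every direction along which $F$ is non-constant (possible because $\mathrm{relint}\,F$ is contained in no hyperplane $\{x_{i} \in \ZZ\}$ for such $i$). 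Then $N(\cdot)$ is constant near $x_{0}$ along $\mathrm{aff}\,F$, so integral convexity places a neighbourhood of $x_{0}$, full-dimensional within $\mathrm{aff}\,F$, inside the single polytope $Q = \overline{S \cap N(x_{0})}$. Hence the face $Q \cap F$ of $Q$ has the same dimension, and therefore the same affine hull, as $F$; its vertices lie in $N(x_{0})$ and so differ pairwise by vectors in $\{-1,0,+1\}\sp{n}$. Taking one of these vertices as the base point $x$ and the differences to the remaining vertices as the $d\sp{(k)}$ exhibits $\mathrm{aff}\,F$ in the required form, completing the proof.
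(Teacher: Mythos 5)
Your proposal is correct and reaches all three conclusions (integrality, the $\{-1,0,+1\}$ direction vectors, polyhedrality), but it gets there by a noticeably different route in two places. First, where the paper proves closedness of $\overline{S}$ by a subsequence argument (choosing a subsequence along which $N(x_k)$ stabilizes), you derive it from the cleaner structural identity $\overline{S}\cap[z,z+\mathbf{1}]=\overline{S\cap C_z}$, which packages the definition of integral convexity cube by cube; this is a nice reusable fact and arguably more transparent. Second, for the direction-vector claim the paper anchors at an integer point $x\in F$ (obtained exactly as in your minimal-face argument) and, for each $y\in F\setminus\{x\}$, perturbs to $z=(1-\varepsilon)x+\varepsilon y$ so that $x\in N(z)$ and writes $y-x$ as a nonnegative combination of the vectors $z^{(k)}-x\in\{-1,0,+1\}^n$ with $z^{(k)}\in S\cap N(z)\cap F$; you instead pick a generic point $x_0$ of $\mathrm{relint}\,F$ whose integral neighborhood is locally constant along $\mathrm{aff}\,F$ and read off the directions from the single polytope $\overline{S\cap N(x_0)}$. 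Both work; the paper's version needs no genericity argument, while yours avoids the $\varepsilon$-perturbation and collects all directions at once. The polyhedrality step is essentially identical in both proofs (finitely many possible facet normals because each facet's affine hull is spanned by $\{-1,0,+1\}$-vectors), and both are equally terse about why finitely many facet normals plus local polyhedrality yields a global polyhedron; you at least flag the exhaustion-of-the-boundary and lineality-space issues explicitly, which the paper does not, so this is not a gap relative to the paper's own standard of rigor.
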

\begin{proof}
The proof is given in Section \ref{SCproofpolyh}.
\end{proof}

\begin{remark} \rm \label{RMedgedir}
The properties mentioned in Proposition \ref{PRpolyhedICset} do not characterize 
integral convexity of a set.
For example, let
$S = \{  (0,0,0), \allowbreak  (1,0,1),  \allowbreak (1,1,-1), (2,1,0) \}$.
The convex hull $\overline{S}$ is a parallelogram with edge directions
$(1,0,1)$ and $(1,1,-1)$,
and hence is an integer polyhedron such that
the smallest affine subspace containing each face is spanned by $\{ -1,0,1 \}$-vectors.
However, $S$ is not integrally convex, since
$x = [(1,0,1) + (1,1,-1) ]/2 = (1,1/2,0) \in \overline{S}$,  
$N(x) = \{ (1,0,0), (1,1,0) \}$, and 
$S \cap N(x) = \emptyset$.
\finbox
\end{remark}

The effective domain of an integrally convex function is an integrally convex set.
Integral convexity of a function can be characterized by a local condition
under the assumption that the effective domain is an integrally convex set.

\begin{theorem}[\cite{FT90,MMTT17proxIC}]
\label{THfavtarProp33}
Let $f: \mathbb{Z}^{n} \to \mathbb{R} \cup \{ +\infty  \}$
be a function with an integrally convex effective domain.
Then the following properties are equivalent:

{\rm (a)}
$f$ is integrally convex.

{\rm (b)}
For every $x, y \in \ZZ\sp{n}$ with $\| x - y \|_{\infty} =2$  we have \ 
\begin{equation}  \label{intcnvconddist2}
\tilde{f}\, \bigg(\frac{x + y}{2} \bigg) 
\leq \frac{1}{2} (f(x) + f(y)).
\end{equation}
\vspace{-1.7\baselineskip} \\
\finbox
\end{theorem}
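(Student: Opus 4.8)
The plan is to prove the two implications separately, with almost all of the work in $\mathrm{(b)} \Rightarrow \mathrm{(a)}$; the standing hypothesis that $\domZ f$ is integrally convex will be used only there. For $\mathrm{(a)} \Rightarrow \mathrm{(b)}$ I would argue straight from the definition. Since $f$ is integrally convex, $\tilde f$ is convex on $\RR^{n}$, and for any integer point $z$ one has $N(z) = \{ z \}$, so $\tilde f(z) = f(z)$. Given $x, y \in \ZZ^{n}$ with $\| x - y \|_{\infty} = 2$, the midpoint $(x+y)/2$ is the convex combination $\frac12 x + \frac12 y$; if $f(x)$ or $f(y)$ equals $+\infty$ then (\ref{intcnvconddist2}) is trivial, and otherwise convexity of $\tilde f$ yields $\tilde f((x+y)/2) \le \frac12\tilde f(x) + \frac12\tilde f(y) = \frac12(f(x)+f(y))$.

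For $\mathrm{(b)} \Rightarrow \mathrm{(a)}$ the goal is to show that $\tilde f$ is convex on $\overline{\domZ f}$. As a preliminary observation, $\tilde f$ is a continuous piecewise-linear function: on each unit hypercube with integer vertices its restriction is the convex envelope of the corner values, hence convex and piecewise linear, and these cube-wise pieces agree on shared faces because $N(\cdot)$ on a face sees only the vertices of that face. Here the hole-freeness (\ref{holefree}) of the integrally convex set $\domZ f$, together with Proposition \ref{PRpolyhedICset}, guarantees that $\overline{\domZ f}$ is tiled by these cubes without gaps and that $\tilde f$ is finite throughout (for $x \in \overline{\domZ f}$ integral convexity gives $x \in \overline{\domZ f \cap N(x)}$, so $\tilde f(x)$ is a convex combination of finite values). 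Because $\tilde f$ is already convex on each cube, the only way global convexity can fail is through a concave kink where two cubes meet, that is, across one of the lattice hyperplanes $\{ x_{i} = c \}$ with $c \in \ZZ$. It therefore suffices to rule out such kinks.

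To rule out a kink across a facet shared by two adjacent cubes $C_{a}, C_{b}$, I would take interior points $a \in C_{a}$ and $b \in C_{b}$ placed symmetrically about the facet, so that $(a+b)/2$ lies on it, and aim at $\tilde f((a+b)/2) \le \frac12(\tilde f(a) + \tilde f(b))$. Expanding $\tilde f(a)$ and $\tilde f(b)$ as the optimal convex combinations over $N(a)$ and $N(b)$ from (\ref{fnconvclosureloc2}) and pairing terms, the crucial geometric fact is that a vertex $u \in N(a)$ and a vertex $v \in N(b)$ differ by at most $2$ in the crossing coordinate $i$ (since $u_{i} \in \{c-1,c\}$ and $v_{i} \in \{c,c+1\}$) and by at most $1$ in every other coordinate (the two cubes sharing the facet have identical ranges there), so $\| u - v \|_{\infty} \le 2$. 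Hence the midpoint inequality for each pair $(u,v)$ is exactly an instance of hypothesis (b), or is trivial when $\| u - v \|_{\infty} \le 1$ and $u,v$ share a cube. Summing these instances against the convex-combination weights, while using integral convexity of $\domZ f$ to keep the intermediate points inside the domain so that no $+\infty$ term intrudes, should deliver the inequality across the facet.

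The step I expect to be the main obstacle is precisely this local-to-global passage. The naive reduction of global midpoint convexity to the integer inequality by a bilinear expansion of two convex combinations is circular, because recombining the midpoints $(u+v)/2$ back into $(a+b)/2$ already presupposes convexity of $\tilde f$ on those midpoints. The genuine content is thus the combinatorial geometry of the convex-envelope cells abutting a lattice hyperplane: one must verify that the distance-$2$ inequalities, which constrain only the diagonal pairs straddling the facet, suffice to force the two cube-wise convex pieces to fit together convexly. This is exactly where the distance-$2$ form of (b) and the integral convexity (hole-freeness) of $\domZ f$ are both indispensable, and carrying it out carefully—rather than the routine reductions surrounding it—is the crux of the proof.
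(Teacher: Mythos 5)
Your direction (a) $\Rightarrow$ (b) is correct and routine. The problem is (b) $\Rightarrow$ (a): what you have written is a plan whose central step is explicitly left open, and you say so yourself. After expanding $\tilde f(a)$ and $\tilde f(b)$ into optimal convex combinations over $N(a)$ and $N(b)$ and bounding each pair by $\tilde f\bigl((u+v)/2\bigr) \leq \frac{1}{2}(f(u)+f(v))$ --- via (b) when $\| u-v \|_{\infty}=2$, or within a single cube when $\| u-v \|_{\infty}\leq 1$ --- you still need $\tilde f\bigl((a+b)/2\bigr) \leq \sum_{u,v} \lambda_{u}\mu_{v}\, \tilde f\bigl((u+v)/2\bigr)$, i.e., Jensen's inequality for $\tilde f$ at the half-integral midpoints near the facet, which is exactly the convexity being proved. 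Your final paragraph concedes this circularity and declares the remaining verification to be ``the crux''; since that verification is never carried out, the proposal is not a proof. A secondary inaccuracy: for a general integrally convex effective domain, $\overline{\domZ f}$ is \emph{not} tiled by unit cubes (it may cut through cubes, e.g.\ $\domZ f = \{(0,0),(1,0),(1,1)\}$, whose hull is a half-cube), so even your framing ``convex on each cube, so kinks can occur only across lattice hyperplanes'' needs repair in precisely the generality the theorem asserts; the cells abutting the boundary of $\overline{\domZ f}$ and the points where $\tilde f = +\infty$ must be handled as well.

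For calibration: the paper itself contains no proof of this theorem --- it is quoted from \cite{FT90} (Proposition 3.3, for integer-interval domains) and \cite{MMTT17proxIC} (for general integrally convex domains), as Remark \ref{RMintcnvconcept} records --- so your attempt must be measured against those external proofs, and they supply exactly the mechanism you leave open, by a route different from recombining convex combinations. Roughly, one either reduces convexity of $\tilde f$ to the statement that, for every linear perturbation $f - \langle p, \cdot \rangle$, local minimality over $\{-1,0,+1\}^{n}$ increments implies global minimality (cf.\ Theorem \ref{THintcnvlocopt}) and derives this from the distance-$2$ inequality (b) by a minimal-counterexample, distance-reduction argument; or one takes a supporting affine function of the envelope on a cell on one side of the facet and propagates the bound $h \leq f$ to the integer points on the other side, testing it through (b) only on the diagonal distance-$2$ pairs. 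Either device is a one-sided propagation that avoids your circular recombination, and it is where the actual content of (b) $\Rightarrow$ (a) resides. As it stands, your submission correctly isolates the difficulty but does not overcome it.
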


A minimizer of an integrally convex function
can be characterized by a local minimality condition as follows.

\begin{theorem}[\protect{\cite[Proposition 3.1]{FT90}};
  see also \protect{\cite[Theorem 3.21]{Mdcasiam}}]
  \label{THintcnvlocopt}
Let $f: \mathbb{Z}^{n} \to \mathbb{R} \cup \{ +\infty  \}$
be an integrally convex function and $x^{*} \in \domZ f$.
Then $x^{*}$ is a minimizer of $f$ 
if and only if
$f(x^{*}) \leq f(x^{*} +  d)$ for all $d \in  \{ -1, 0, +1 \}^{n}$.
\finbox
\end{theorem}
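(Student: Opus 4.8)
The plan is to reduce everything to the convexity of the local convex extension $\tilde f$, which is available precisely because $f$ is integrally convex. The ``only if'' direction is immediate: a global minimizer is in particular minimal among the values $f(x\sp{*}+d)$ with $d\in\{-1,0,+1\}\sp{n}$. All the substance lies in the ``if'' direction, so assume $f(x\sp{*})\le f(x\sp{*}+d)$ for every $d\in\{-1,0,+1\}\sp{n}$. My strategy is to show that $x\sp{*}$ minimizes $\tilde f$ over all of $\RR\sp{n}$, and then restrict back to integer points.

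First I would prove that $x\sp{*}$ is a local minimizer of $\tilde f$, i.e.\ that $\tilde f(y)\ge f(x\sp{*})=\tilde f(x\sp{*})$ for every $y$ in the closed box $x\sp{*}+[-1,+1]\sp{n}$. The key observation is that for such a $y$ the integral neighborhood satisfies $N(y)\subseteq\{x\sp{*}+d\mid d\in\{-1,0,+1\}\sp{n}\}$: coordinatewise, $|y_{i}-x_{i}\sp{*}|\le 1$ forces every integer $z_{i}$ with $|y_{i}-z_{i}|<1$ to lie in $\{x_{i}\sp{*}-1,\,x_{i}\sp{*},\,x_{i}\sp{*}+1\}$. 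Since, by the definition \eqref{fnconvclosureloc2}, $\tilde f(y)$ equals a convex combination $\sum_{z\in N(y)}\lambda_{z}f(z)$ and each $f(z)\ge f(x\sp{*})$ by hypothesis, we obtain $\tilde f(y)\ge f(x\sp{*})$, as required.

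Next I would upgrade this local minimality to a global one using convexity of $\tilde f$: a convex function on $\RR\sp{n}$ whose value at $x\sp{*}$ is minimal over a full-dimensional neighborhood of $x\sp{*}$ attains its global minimum at $x\sp{*}$. Indeed, for arbitrary $z\in\RR\sp{n}$ and small $t>0$ the point $(1-t)x\sp{*}+tz$ lies in the box, so $f(x\sp{*})=\tilde f(x\sp{*})\le\tilde f((1-t)x\sp{*}+tz)\le(1-t)\tilde f(x\sp{*})+t\tilde f(z)$, which yields $\tilde f(x\sp{*})\le\tilde f(z)$. Restricting to $z\in\ZZ\sp{n}$ and using $\tilde f(x)=f(x)$ for integer $x$ then gives $f(x\sp{*})\le f(z)$ for all $z\in\ZZ\sp{n}$, so $x\sp{*}$ is a global minimizer.

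I expect the only delicate point to be the containment $N(y)\subseteq\{x\sp{*}+d\mid d\in\{-1,0,+1\}\sp{n}\}$, in particular the behavior at the boundary of the box where some coordinates $y_{i}$ are integral and the neighborhood in that coordinate degenerates to a single value; once this is checked, the remainder is a routine application of the convexity of $\tilde f$.
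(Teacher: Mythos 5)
Your proof is correct. Note that the paper itself gives no proof of this theorem; it is quoted from Favati--Tardella \cite{FT90} (see also \cite[Theorem~3.21]{Mdcasiam}), and your argument is essentially the standard one used there: local minimality of $\tilde f$ over the box $x^{*}+[-1,+1]^{n}$ via the containment $N(y)\subseteq x^{*}+\{-1,0,+1\}^{n}$, then globalization by convexity of $\tilde f$ and restriction to $\ZZ^{n}$ using $\tilde f = f$ on integer points. The one point you flagged as delicate is indeed fine: for $y$ in the box and $z\in N(y)$ one has $|z_{i}-x_{i}^{*}|\le |z_{i}-y_{i}|+|y_{i}-x_{i}^{*}|<2$, and integrality of $z_{i}-x_{i}^{*}$ forces $|z_{i}-x_{i}^{*}|\le 1$; the only other (minor) care needed is the usual convention for $+\infty$ in the convexity inequality and in reading $\tilde f(y)$ as an attained convex combination when it is finite.
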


\begin{remark} \rm \label{RMintcnvconcept}
The concept of integrally convex functions is introduced in \cite{FT90} 
for functions defined on integer intervals (discrete rectangles).
The extension to functions with general integrally convex effective domains
is straightforward, which is found in \cite{Mdcasiam}.
Theorem~\ref{THfavtarProp33} is proved in \cite[Proposition 3.3]{FT90} 
when the effective domain is an integer interval 
and in \cite{MMTT17proxIC} for the general case. 
\finbox
\end{remark}

\begin{remark} \rm \label{RMconjIC}
The integral conjugate of an integrally convex function $f$ is not necessarily integrally convex.
This is shown by the following example
(\cite[Example 4.15]{MS01rel} with a minor correction).
Let $S = \{(1, 1, 0, 0), (0, 1, 1, 0), (1, 0, 1, 0), (0, 0, 0, 1)\}$.
This is obviously an integrally convex set, as it is contained in $\{ 0,1 \}\sp{4}$.
Accordingly, its indicator function $\delta_{S}: \ZZ^{4} \to \{0, + \infty\}$ 
is integrally convex.
 The integral conjugate $g = \delta_{S}^\bullet$ is given by
\[
 g(p_{1}, p_{2}, p_{3}, p_{4}) 
 = \max\{p_{1} + p_{2}, p_{2} + p_{3}, p_{1} + p_{3}, p_{4}\} \qquad (p \in \ZZ^{4}).
\]
Let $\tilde g$ be the local convex extension of $g$.
For $p =(0,0,0,0)$ and $q=(1,1,1,2)$ 
we have  
$(p+q)/2 =(1/2,1/2,1/2,1) = [(1,0,0,1) + (0,1,0,1) + (0,0,1,1) + (1,1,1,1)]/4$ and 
$\tilde g ((p+q)/2)  = [g(1,0,0,1) + g(0,1,0,1) + g(0,0,1,1) + g(1,1,1,1)]/4 
= (1+1+1+2)/4 = 5/4$,
whereas $(g(p)+ g(q)) /2 = (0+2)/2 = 1$.
Thus we have
$\tilde g ((p+q)/2) > (g(p)+ g(q)) /2$,
violating (\ref{intcnvconddist2}) in Theorem~\ref{THfavtarProp33}.
Hence $g$ is not integrally convex.
\finbox
\end{remark}

\section{Results}
\label{SCres}

\subsection{Integral subgradients}
\label{SCsubr}

\begin{theorem}[Integral subdifferentiability]  \label{THsubgrIC}
For an integer-valued integrally convex function 
$f: \ZZ^{n} \to \ZZ \cup \{ +\infty \}$,
we have $\subgZ f(x) \neq \emptyset$ for all $x \in\domZ f$.
\end{theorem}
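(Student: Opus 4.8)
The plan is to reduce the existence of an integral subgradient to finding an integer point in an explicitly described polyhedron, and then to locate such a point by Fourier--Motzkin elimination. Fix $x \in \domZ f$; by translating the argument and subtracting a constant we may assume $x = \veczero$ and $f(\veczero) = 0$, so that we seek $p \in \ZZ^{n}$ with $f(y) \ge \langle p, y\rangle$ for all $y \in \ZZ^{n}$. Since subtracting the linear function $\langle p, \cdot\rangle$ preserves integral convexity, $p$ is an integral subgradient precisely when $\veczero$ minimizes $f - \langle p, \cdot\rangle$; by the local optimality criterion (Theorem~\ref{THintcnvlocopt}) this holds if and only if $p$ satisfies the finite system
\[
\langle p, d\rangle \ \le\ f(d) - f(\veczero) \qquad (d \in \{-1,0,+1\}^{n},\ d \in \domZ f).
\]
Write $P \subseteq \RR^{n}$ for its set of real solutions. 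Every inequality has a $\{-1,0,+1\}$-coefficient vector and an integer right-hand side, and $P$ is nonempty because $\tilde f$ is a polyhedral convex function and hence subdifferentiable at $\veczero$: any ordinary subgradient of $\tilde f$ at $\veczero$, restricted to integer points, lies in $P$. It therefore suffices to prove that $P$ contains an integer point.

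To exhibit an integer point of $P$ I would apply Fourier--Motzkin elimination to the variables $p_{n}, p_{n-1}, \ldots, p_{1}$, determining integer values one coordinate at a time. On the variable currently eliminated every inequality of the starting system has coefficient in $\{-1,0,+1\}$, so a single step adjoins only the sums $\langle p, d + d'\rangle \le (f(d)-f(\veczero)) + (f(d')-f(\veczero))$ over pairs with opposite signs in that coordinate. The resulting coefficient vector $e = d + d'$ is zero in the eliminated coordinate but may carry entries $\pm 2$ elsewhere, and this is exactly what threatens integrality of the bounds produced later. The key device controlling this is a domination estimate: for any $p \in P$ one has $\langle p, e\rangle \le 2\tilde f(e/2) - 2 f(\veczero)$, because $\tilde f(e/2)$ is a convex combination $\sum_{w} \lambda_{w} f(w)$ of values at integer neighbors $w \in N(e/2)$, all of which lie in $\{-1,0,+1\}^{n}$, and each such $w$ already satisfies $\langle p, w\rangle \le f(w) - f(\veczero)$. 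Combined with the distance-two inequality~(\ref{intcnvconddist2}) of Theorem~\ref{THfavtarProp33}, namely $\tilde f(e/2) \le \tfrac12(f(d) + f(d'))$, this shows that each such combined inequality is, over $P$, a consequence of the $\{-1,0,+1\}$-inequalities together with integral convexity.

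The main obstacle, and the heart of the argument, is to convert this domination property into the statement that the elimination never yields a non-integral bound, i.e.\ that each polyhedron arising along the way is an integer polyhedron definable anew by inequalities with $\{-1,0,+1\}$ coefficients and integer right-hand sides. Concretely, I expect to prove by induction on the number of eliminated variables that the inequalities carrying a $\pm 2$ entry are redundant once the genuinely tight $\{-1,0,+1\}$-inequalities are kept, so that when only $p_{1}$ survives its feasible range $[L,U]$ has integer endpoints; an integer value is then chosen, and back-substitution supplies integer values for $p_{2}, \ldots, p_{n}$ in turn. Making this redundancy precise is where integral convexity is indispensable: the local convex extension identity for $\tilde f(e/2)$ and condition~(\ref{intcnvconddist2}) do the work at each step, while Proposition~\ref{PRpolyhedICset}, ensuring that the faces of the integrally convex effective domain are spanned by $\{-1,0,+1\}$-vectors, guarantees that the displayed system faithfully captures the recession directions of $P$ and that integrality is not lost along the boundary. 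I anticipate that the bookkeeping of which combined inequalities remain active, rather than the estimate for any single one, will be the most delicate part.
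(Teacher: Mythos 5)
Your proposal follows essentially the same route as the paper: reduce to the finite system indexed by $\{-1,0,+1\}\sp{n}$ via the local optimality criterion (Theorem~\ref{THintcnvlocopt}), run Fourier--Motzkin elimination, and kill the combined inequalities using the local convex extension together with the distance-two inequality (\ref{intcnvconddist2}). Your ``domination estimate'' is precisely the paper's key lemma (Lemma~\ref{LMelimIC}), so the central idea is correct and correctly placed.

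The one substantive remark is that you overestimate what remains after that estimate. You present the conversion into integrality of the eliminated bounds as an unfinished induction with ``delicate bookkeeping,'' but your own estimate already closes the argument once you note one fact you leave implicit: since $d$ and $d'$ have opposite signs in the eliminated coordinate, $e/2=(d+d')/2$ has entry $0$ there, hence every $w\in N(e/2)$ has entry $0$ there as well. Thus the combined inequality is implied not merely by the original system but by the inequalities that \emph{survive} the elimination step (those with coefficient $0$ in the eliminated variable). Consequently no new inequality is ever adjoined: the system present at stage $j$ is literally a subset of the original $\{-1,0,+1\}$-inequalities with right-hand sides $f(y)\in\ZZ$, the final range for $p_{n}$ has integer endpoints, and back-substitution gives integer $p_{n-1},\ldots,p_{1}$ because each bound is an integer combination of already-chosen integers. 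There is no induction on ``which combined inequalities remain active'' to manage, and Proposition~\ref{PRpolyhedICset} (recession directions, faces of $\overline{\domZ f}$) plays no role in this part of the proof --- it is needed only for the biconjugacy conditions (\ref{Fcond1})--(\ref{Fcond2}), not for subdifferentiability.
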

\begin{proof}
The proof is given in Section \ref{SCproofsubgr}.
\end{proof}

The following example shows that integral subdifferentiability is not guaranteed
without the assumption of integral convexity.

\begin{example}[{\cite[Example 1.1]{Mdca98}}]  \rm \label{EXla1}
Let $D = \{ (0,0,0), \pm (1,1,0), \pm (0,1,1), \pm (1,0,1) \}$
and $f: \mathbb{Z}^3 \to \mathbb{Z} \cup \{+\infty\}$ be defined by
\begin{align*}
f(x_{1},x_{2},x_{3}) =
 \begin{cases} (x_{1}+x_{2}+x_{3})/2 & (x \in D), \\
                +\infty & (\textrm{otherwise}).  
  \end{cases}
\end{align*}
This function can be naturally extended to a convex function on 
the convex hull $\overline{D}$ of $D$
and $D$ is hole-free in the sense of (\ref{holefree}).
However, $D$ is not integrally convex since
for $x=(1,1,0)$ and $y=(-1,0,-1)$ we have 
$(x+y)/2 = (0, 1/2, -1/2)$,
$N((0, 1/2, -1/2)) = \{ (0,0,0), (0,1,0), (0,0,-1), (0,1,-1)  \}$,
and hence
$N((0, 1/2, -1/2)) \cap D    \allowbreak   = \{ (0,0,0) \}$.
Therefore, $f$ is not integrally convex.

To investigate the integral subgradient of $f$ at the origin,
suppose that $p \in \subgZ f(\veczero) \subseteq \mathbb{Z}^3$. 
Since $f(y) - f(\veczero) \ge \langle p, y \rangle$ for all $y \in D$, we must have
\begin{center}
\begin{tabular}{ccc}
$ 1 \ge  p_{1} + p_{2}$, & $ 1 \ge  p_{2} + p_{3}$, & $ 1 \ge  p_{3} + p_{1}$, \\
$-1 \ge -p_{1} - p_{2}$, & $-1 \ge -p_{2} - p_{3}$, & $-1 \ge -p_{3} - p_{1}$.
\end{tabular}
\end{center}
However, this system admits no integer solution, 
though it is satisfied by $(p_{1}, p_{2}, p_{3}) = (1/2, 1/2, 1/2)$.
Hence $\subgZ f(\veczero) = \emptyset$.
\finbox
\end{example}

\begin{remark} \rm \label{RMsubgNotIntPolyh}
Here is a subtle point about the statement of Theorem~\ref{THsubgrIC}. 
In parallel to the integral subdifferential $\subgZ f(x)$ in (\ref{subgZZdef}),
the (real) subdifferential $\subgR f(x)$ is defined by
\begin{equation} \label{subgZRdef2}
 \subgR f(x)
= \{ p \in  \RR\sp{n} \mid    
  f(y) - f(x)  \geq  \langle p, y - x \rangle     \ \mbox{for all }  y \in \ZZ\sp{n} \}.
\end{equation}
Theorem~\ref{THsubgrIC} states that 
$\subgR f(x) \cap \ZZ\sp{n} \not= \emptyset$,
but it does not claim a stronger statement that $\subgR f(x)$ is an integer polyhedron.
Indeed, $\subgR f(x)$ is not necessarily an integer polyhedron, as the following example shows.
Let 
$f: \mathbb{Z}^3 \to \mathbb{Z} \cup \{+\infty\}$ be defined by
$f(0,0,0)=0$ and $f(1,1,0)=f(0,1,1)=f(1,0,1)=1$,
with $\domZ f = \{ (0,0,0), (1,1,0), (0,1,1), (1,0,1) \}$.
This function is integrally convex
 and the subdifferential of $f$ at the origin is given as
\[
\subgR f(\veczero) = \{ p= (p_{1}, p_{2}, p_{3}) \in \RR\sp{3} \mid 
 p_{1} + p_{2} \leq 1,  
 p_{2} + p_{3} \leq 1,  
 p_{1} + p_{3} \leq 1   \},
\]
which is not an integer polyhedron, having a non-integral vertex at $p=(1/2, 1/2, 1/2)$.
In contrast, it is known \cite{Mdcasiam} that,
$\subgR f(x)$ is an integer polyhedron if $f$ is 
${\rm L}^{\natural}$-convex, ${\rm M}^{\natural}$-convex,  
${\rm L}^{\natural}_{2}$-convex, or ${\rm M}^{\natural}_{2}$-convex.
\finbox
\end{remark}

\begin{remark} \rm \label{RMsubgBS}
BS-convex and UJ-convex functions are investigated by Fujishige \cite{Fuj14bisubmdc}.
For an integer-valued BS-convex function $f$, 
the subdifferential $\subgR f(x)$ in (\ref{subgZRdef2})
contains a half-integral vector
\cite[Theorem 2]{Fuj14bisubmdc}, and it contains an integral vector
if the function $f$ arises as the conjugate of a $D$-convex function,
which, by definition, is associated with a disconcordant Freudenthal simplicial division $D$ 
\cite[Theorem 5]{Fuj14bisubmdc}.
The function used as an example in Remark \ref{RMsubgNotIntPolyh} 
is actually a BS-convex function (\cite[Example~3]{Fuj14bisubmdc}),
and therefore, $\subgR f(x)$ is not necessarily an integer polyhedron
for a BS-convex function $f$.
Nevertheless, BS-convex and UJ-convex functions admit integral subgradients,
as they are integrally convex.  This fact is stated below as a corollary
of Theorem~\ref{THsubgrIC}.
\finbox
\end{remark}

\begin{corollary}  \label{COsubgrBSUJ}
\quad

\noindent
{\rm (1)}
For an integer-valued BS-convex function $f$, 
we have $\subgZ f(x) \neq \emptyset$ for all $x \in\domZ f$.

\noindent
{\rm (2)}
For an integer-valued UJ-convex function $f$, 
we have $\subgZ f(x) \neq \emptyset$ for all $x \in\domZ f$.
\finbox
\end{corollary}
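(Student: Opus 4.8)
The plan is to derive Corollary \ref{COsubgrBSUJ} as an immediate consequence of Theorem \ref{THsubgrIC}. The key observation is that Theorem \ref{THsubgrIC} asserts integral subdifferentiability for \emph{every} integer-valued integrally convex function, with no further structural hypotheses. Therefore it suffices to verify that BS-convex functions and UJ-convex functions, when integer-valued, fall within the scope of that theorem, i.e., that each of these two function classes consists of integrally convex functions. Once this class inclusion is in hand, both statements (1) and (2) follow by direct application of Theorem \ref{THsubgrIC}.

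First I would invoke the fact, already recorded in the Introduction of this paper, that BS-convex and UJ-convex functions \cite{Fuj14bisubmdc} are integrally convex. This is the crucial input: the definitions of BS-convexity and UJ-convexity are given in terms of concordant/disconcordant Freudenthal simplicial divisions, but Fujishige's framework guarantees that a function in either class admits a piecewise-linear convex extension whose restriction to each unit cube is governed by a Freudenthal-type triangulation, and such an extension agrees with the local convex extension $\tilde f$ of \eqref{fnconvclosureloc2}. Hence $\tilde f$ is convex on $\RR\sp{n}$, which is exactly the defining condition of integral convexity. I would cite this inclusion rather than reprove it, as it is standard in the cited literature and stated as background here.

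With integral convexity established, the argument is then purely a matter of specialization. For part (1), let $f$ be an integer-valued BS-convex function. Since $f$ is integrally convex, Theorem \ref{THsubgrIC} applies verbatim and yields $\subgZ f(x) \neq \emptyset$ for every $x \in \domZ f$. Part (2) is identical with ``BS-convex'' replaced by ``UJ-convex.'' No separate analysis of the two classes is required once the common property of integral convexity is recognized.

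I do not expect any genuine obstacle here, since the corollary is a clean corollary in the literal sense: all the work is carried by Theorem \ref{THsubgrIC} and by the previously noted class inclusion. The only point demanding a little care is the integer-valuedness hypothesis---Theorem \ref{THsubgrIC} requires $f$ to take values in $\ZZ \cup \{+\infty\}$, and this must be carried over explicitly into the corollary's hypotheses, as it is in the statement. It is worth emphasizing, as Remark \ref{RMsubgBS} already does, that this integral subdifferentiability holds despite the fact that the real subdifferential $\subgR f(x)$ need not be an integer polyhedron for BS-convex $f$; the corollary is precisely the assertion that an integral subgradient nonetheless always exists, which is a strictly weaker and genuinely new conclusion obtained for free from the integrally convex setting.
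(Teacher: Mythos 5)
Your proposal is correct and matches the paper's own reasoning exactly: the corollary follows by combining the fact that BS-convex and UJ-convex functions are integrally convex (noted in the Introduction and in Remark~\ref{RMsubgBS}, citing \cite{Fuj14bisubmdc}) with a direct application of Theorem~\ref{THsubgrIC}. No further comment is needed.
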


\subsection{Integral biconjugacy}
\label{SCbiconj}

In this section we establish the integral biconjugacy
$f\sp{\bullet\bullet} = f$
for integer-valued integrally convex functions.

\begin{lemma}[{\cite[Lemma 4.1]{Mdca98}}]  \label{LMbiconjsubg}
 For each $x \in \domZ f$ we have: \ \ 
$f\sp{\bullet\bullet}(x) = f(x)  \iff  \subgZ f(x) \not= \emptyset$. 
\end{lemma}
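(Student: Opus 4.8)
The plan is to combine the elementary Fenchel--Young inequality with an integrality argument that forces the supremum defining the biconjugate to be attained. First I would record the one-sided inequality $f\sp{\bullet\bullet}(x) \le f(x)$, valid for every $x \in \domZ f$: for any $p \in \ZZ\sp{n}$ the definition (\ref{conjvexZpZ}) gives $f\sp{\bullet}(p) \ge \langle p, x \rangle - f(x)$, hence $\langle p, x \rangle - f\sp{\bullet}(p) \le f(x)$, and taking the supremum over $p \in \ZZ\sp{n}$ yields the claim.

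For the implication ($\Leftarrow$), I would take $p \in \subgZ f(x)$ and rewrite the subgradient inequality (\ref{subgZZdef}) as $\langle p, y \rangle - f(y) \le \langle p, x \rangle - f(x)$ for all $y \in \ZZ\sp{n}$, with equality at $y = x$. This shows $f\sp{\bullet}(p) = \langle p, x \rangle - f(x)$, so that $f\sp{\bullet\bullet}(x) \ge \langle p, x \rangle - f\sp{\bullet}(p) = f(x)$. Combined with the Fenchel--Young inequality above, this forces $f\sp{\bullet\bullet}(x) = f(x)$.

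For the implication ($\Rightarrow$), I would assume $f\sp{\bullet\bullet}(x) = f(x)$, a finite integer, and show that the supremum $f\sp{\bullet\bullet}(x) = \sup\{ \langle p, x \rangle - f\sp{\bullet}(p) \mid p \in \ZZ\sp{n} \}$ is attained. Integrality is what does the work here: since $f$ is integer-valued, so is $f\sp{\bullet}$, and since $x, p \in \ZZ\sp{n}$ each term $\langle p, x \rangle - f\sp{\bullet}(p)$ lies in $\ZZ \cup \{ -\infty \}$. A supremum of integers bounded above by and equal to the finite integer $f(x)$ must be achieved by some $p\sp{*} \in \ZZ\sp{n}$, since otherwise every finite term would be at most $f(x) - 1$, making the supremum strictly smaller. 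Attainment yields $f\sp{\bullet}(p\sp{*}) = \langle p\sp{*}, x \rangle - f(x)$, i.e. $\langle p\sp{*}, y \rangle - f(y) \le \langle p\sp{*}, x \rangle - f(x)$ for all $y \in \ZZ\sp{n}$, which is exactly the defining condition $p\sp{*} \in \subgZ f(x)$. Hence $\subgZ f(x) \ne \emptyset$.

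The main obstacle is precisely this attainment of the supremum in the last implication. In the real-valued setting one would need a closedness or properness argument to guarantee that the biconjugate value is achieved; here, by contrast, the discreteness of the range together with the integrality of the lattice of candidate subgradients reduces the issue to the trivial fact that a set of integers bounded above attains its supremum. This is exactly the point at which the integer-valued hypothesis on $f$ is indispensable.
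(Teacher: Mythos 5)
Your proof is correct and follows essentially the same route as the paper: both rest on the Fenchel--Young characterization $p \in \subgZ f(x) \iff f(x) + f\sp{\bullet}(p) = \langle p, x \rangle$, use it directly for ($\Leftarrow$), and for ($\Rightarrow$) extract a maximizer of the supremum defining $f\sp{\bullet\bullet}(x)$. The only difference is that you spell out why that supremum is attained (integrality of the values), a point the paper's proof leaves implicit.
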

\begin{proof}
By the definitions (\ref{conjvexZpZ}) and (\ref{subgZZdef})
it holds, for $x \in \domZ f$ and $p \in \ZZ\sp{n}$, that
\begin{equation} \label{subgconj}
p \in  \subgZ f(x)  \iff f(x) + f\sp{\bullet}(p) = \langle p, x \rangle  .
\end{equation}
If there exists $p \in \subgZ f(x)$, (\ref{subgconj}) implies that 
$f(x) + f\sp{\bullet}(p) = \langle p, x \rangle$.   From this 
and the definition of $f\sp{\bullet\bullet}(x)$ we obtain
$f\sp{\bullet\bullet}(x) \geq \langle p, x \rangle - f\sp{\bullet}(p) = f(x)$,
while $ f\sp{\bullet\bullet}(x) \leq f(x)$ is obvious.
Conversely, if 
$f\sp{\bullet\bullet}(x) = f(x)$, 
there exists 
$p \in \ZZ\sp{n}$ such that
$\langle p, x \rangle - f\sp{\bullet}(p) = f\sp{\bullet\bullet}(x) = f(x)$.
This implies  $p \in \subgZ f(x)$ by (\ref{subgconj}).
\end{proof}

\begin{remark}  \rm  \label{RMfbbf}
The desired integral biconjugacy $f\sp{\bullet\bullet} = f$
does not follow immediately from the combination of 
Theorem~\ref{THsubgrIC} and Lemma \ref{LMbiconjsubg}.
Let 
$f: \ZZ\sp{2} \to \ZZ \cup \{+\infty \}$ be the indicator function of 
$S = \{ (x_{1},x_{2}) \in \ZZ\sp{2}   \mid x_{2} \geq \sqrt{2} x_{1} - 1/2 \}$, which is not an integrally convex set. 
Then  $\subgZ f(x) = \subgR f(x) = \{ (0,0) \} \not= \emptyset$
for all $x \in S = \domZ f$.
On the other hand,
we have $f\sp{\bullet}(0,0) = 0$ and $f\sp{\bullet}(p) =+\infty$ 
for $p \in \ZZ\sp{2}\setminus \{(0,0)\}$,
from which follows that
$f\sp{\bullet\bullet}(x)= 0$ for all $x \in \ZZ\sp{2}$.
Thus we have $\domZ f\sp{\bullet\bullet} =  \ZZ\sp{2} \not= \domZ f$,
and, a fortiori,  $f\sp{\bullet\bullet} \not= f$.
This example,
taken from \cite[Remark 4.1]{Mdca98}, motivates the technical condition (\ref{Fcond2}) below.
\finbox
\end{remark}

For $f: \mathbb{Z}^{n} \to \mathbb{Z} \cup \{+\infty\}$
we consider the following conditions:
\begin{align}
 &  \domZ f = {\rm cl}(\overline{\domZ f}) \cap \mathbb{Z}^{n}  \neq \emptyset,
\label{Fcond1} \\
    & {\rm cl}(\overline{\domZ f}) \textrm{ is rationally-polyhedral},
\label{Fcond2} \\
    & \subgZ f(x) \neq \emptyset
 \ \mbox{for all} \  x \in \domZ f ,
\label{Fcond3}
\end{align}
where ${\rm cl}(\overline{\domZ f})$ denotes 
the closure of the convex hull%
\footnote{
${\rm cl}(\overline{\domZ f})$ coincides with the closed convex hull of $\domZ f$;
\cite[Section 1.4]{HL01}.
} 
of $\domZ f$,
and a closed convex set in $\mathbb{R}^{n}$ is said to be
rationally-polyhedral if it is described by a system of finitely many
inequalities with coefficients of rational numbers.
The first condition (\ref{Fcond1}) is natural,
the second condition (\ref{Fcond2}) is rather technical, 
and the third condition (\ref{Fcond3}) is essential.

\begin{lemma}[{\cite[Lemma 4.2]{Mdca98}}]    \label{LMsubgext}
Suppose that 
$f: \mathbb{Z}^{n} \to \mathbb{Z} \cup \{+\infty\}$
satisfies the conditions {\rm (\ref{Fcond1})}, {\rm (\ref{Fcond2})}, and {\rm (\ref{Fcond3})}.%
\footnote{
In Lemma 4.1 of \cite{Mdca98} an additional condition 
``$\partial_\mathbb{R} f(x) = {\rm cl}(\overline{\domZ f})$''
is involved in the definition of $\mathcal{F}_G$ in (4.18).
However, we can verify that this condition is not needed.
} \  
Then the  following hold.
\smallskip
\par
\noindent
{\rm (1)} \
${\displaystyle  
 \domZ f\sp{\bullet} =  \bigcup \{ \subgZ f(x) \mid x \in \domZ f \}
 \not= \emptyset.
}$

\smallskip

\noindent
{\rm (2)} \
$ \domZ f\sp{\bullet\bullet} = \domZ f$.

\smallskip

\noindent
{\rm (3)} \
$f\sp{\bullet\bullet}(x) = f(x) \qquad (x \in \ZZ\sp{n})$.    

\smallskip

\noindent
{\rm (4)} \
For $x \in \domZ f$, $p \in \domZ f\sp{\bullet}:$  \quad
$p \in  \subgZ f(x) \iff x \in  \subgZ f\sp{\bullet}(p)$.

\smallskip

\noindent
{\rm (5)} \
$\subgZ f\sp{\bullet}(p) \not= \emptyset 
\qquad (p \in \domZ f\sp{\bullet})$.
\finbox
\end{lemma}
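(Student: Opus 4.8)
The plan is to derive the five statements in a fixed order and to reduce everything to the single nontrivial inclusion $\domZ f\sp{\bullet\bullet} \subseteq \domZ f$, which is where condition (\ref{Fcond2}) enters. First I would establish (1). The inclusion $\bigcup\{\subgZ f(x) \mid x \in \domZ f\} \subseteq \domZ f\sp{\bullet}$ is immediate from (\ref{subgconj}): if $p \in \subgZ f(x)$ then $f\sp{\bullet}(p) = \langle p, x\rangle - f(x) < +\infty$. For the reverse inclusion, fix $p \in \domZ f\sp{\bullet}$; since $p \in \ZZ\sp{n}$ and $f$ is integer-valued, the set of values $\{ \langle p, x\rangle - f(x) \mid x \in \domZ f \}$ is a nonempty (by (\ref{Fcond1})) subset of $\ZZ$ bounded above by $f\sp{\bullet}(p)$, so it attains its maximum at some $x\sp{*}$, and (\ref{subgconj}) gives $p \in \subgZ f(x\sp{*})$. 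Nonemptiness of the union follows because (\ref{Fcond1}) provides some $x \in \domZ f$ and (\ref{Fcond3}) then gives $\subgZ f(x) \neq \emptyset$. Note this step uses only (\ref{Fcond1}) and integrality, not (\ref{Fcond2}).

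Next I would settle (3) on $\domZ f$ and the easy half of (2). For $x \in \domZ f$, condition (\ref{Fcond3}) gives $\subgZ f(x) \neq \emptyset$, so Lemma~\ref{LMbiconjsubg} yields $f\sp{\bullet\bullet}(x) = f(x) < +\infty$; in particular $\domZ f \subseteq \domZ f\sp{\bullet\bullet}$. The crux, and the step I expect to be the main obstacle, is the reverse inclusion $\domZ f\sp{\bullet\bullet} \subseteq \domZ f$. Suppose $x \in \ZZ\sp{n}$ with $x \notin \domZ f$; by (\ref{Fcond1}) this means $x \notin {\rm cl}(\overline{\domZ f})$. Since that set is rationally polyhedral by (\ref{Fcond2}), one of its finitely many defining rational inequalities is violated at $x$, giving, after clearing denominators, an integral vector $a$ and a rational $\beta$ with $\langle a, y\rangle \leq \beta$ for all $y \in \domZ f$ yet $\langle a, x\rangle > \beta$. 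Fixing any $p_{0} \in \domZ f\sp{\bullet}$ (nonempty by (1)) and setting $p = t a + p_{0}$ with $t$ a positive integer, the bound $\langle a, y\rangle \leq \beta$ gives $f\sp{\bullet}(t a + p_{0}) \leq t\beta + f\sp{\bullet}(p_{0})$, whence $\langle p, x\rangle - f\sp{\bullet}(p) \geq t(\langle a, x\rangle - \beta) + \langle p_{0}, x\rangle - f\sp{\bullet}(p_{0})$. Since $\langle a, x\rangle - \beta > 0$, letting $t \to \infty$ forces $f\sp{\bullet\bullet}(x) = +\infty$, i.e. $x \notin \domZ f\sp{\bullet\bullet}$.

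This proves $\domZ f\sp{\bullet\bullet} \subseteq \domZ f$, hence (2), and together with the previous paragraph it also completes (3): for $x \in \domZ f$ we already have $f\sp{\bullet\bullet}(x) = f(x)$, while for $x \notin \domZ f$ statement (2) gives $f\sp{\bullet\bullet}(x) = +\infty = f(x)$. I would emphasize in the writeup that (\ref{Fcond2}) is used precisely to obtain the integral separating vector $a$; the example in Remark~\ref{RMfbbf} shows that without rational polyhedrality the inclusion $\domZ f\sp{\bullet\bullet} \subseteq \domZ f$ can fail.

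Finally, (4) and (5) are bookkeeping on top of (3). Applying (\ref{subgconj}) to $f$ and its analogue to $f\sp{\bullet}$, the conditions $p \in \subgZ f(x)$ and $x \in \subgZ f\sp{\bullet}(p)$ read $f(x) + f\sp{\bullet}(p) = \langle p, x\rangle$ and $f\sp{\bullet\bullet}(x) + f\sp{\bullet}(p) = \langle p, x\rangle$ respectively; since $f\sp{\bullet\bullet}(x) = f(x)$ by (3), the two are equivalent, giving (4). For (5), any $p \in \domZ f\sp{\bullet}$ equals some subgradient, $p \in \subgZ f(x)$ with $x \in \domZ f$, by (1), whence $x \in \subgZ f\sp{\bullet}(p)$ by (4), so $\subgZ f\sp{\bullet}(p) \neq \emptyset$.
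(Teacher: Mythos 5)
Your proof is correct and complete. Note that the paper itself gives no proof of this lemma --- it is imported from \cite[Lemma 4.2]{Mdca98} with only a footnote --- so there is no internal argument to compare against; your reconstruction follows the standard route of that reference: integrality of $f$ to get attainment of the supremum in (1), Lemma~\ref{LMbiconjsubg} together with a rational separating hyperplane for the inclusion $\domZ f\sp{\bullet\bullet} \subseteq \domZ f$ (which is exactly where (\ref{Fcond2}) enters, as Remark~\ref{RMfbbf} warns), and the symmetry of (\ref{subgconj}) for (4)--(5). The delicate points are handled: the separating normal can be scaled to an integral vector so that $ta+p_{0}\in\ZZ\sp{n}$, and $f\sp{\bullet}(ta+p_{0})\leq t\beta+f\sp{\bullet}(p_{0})$ is finite, so the quantity $\langle p,x\rangle-f\sp{\bullet}(p)$ driving $f\sp{\bullet\bullet}(x)$ to $+\infty$ is well-defined.
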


\begin{lemma}    \label{LMcond123IC}
An integer-valued integrally convex function satisfies 
the conditions {\rm (\ref{Fcond1})}, {\rm (\ref{Fcond2})}, and {\rm (\ref{Fcond3})}.
\end{lemma}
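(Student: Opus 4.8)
The plan is to verify the three conditions one at a time, in each case reducing the claim to a result already established in Section~\ref{SCintcnvfn}. The pivotal observation is that for an integrally convex $f$ the effective domain $\domZ f$ is itself an integrally convex set, so every structural fact recalled for integrally convex sets applies verbatim to $\domZ f$.

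First I would record the polyhedral structure. By Proposition~\ref{PRpolyhedICset}, the convex hull $\overline{\domZ f}$ of the integrally convex set $\domZ f$ is an integer polyhedron, with the affine hull of each face spanned by $\{-1,0,+1\}$-direction vectors. Being a polyhedron, $\overline{\domZ f}$ is an intersection of finitely many closed half-spaces and is therefore closed, so ${\rm cl}(\overline{\domZ f}) = \overline{\domZ f}$. Since all its vertices are integral and all its unbounded edge directions are $\{-1,0,+1\}$-vectors, hence rational, $\overline{\domZ f}$ is cut out by finitely many rational inequalities; this is exactly condition~(\ref{Fcond2}).

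With ${\rm cl}(\overline{\domZ f})$ now identified with $\overline{\domZ f}$, condition~(\ref{Fcond1}) reduces to $\domZ f = \overline{\domZ f} \cap \mathbb{Z}^{n}$, which is precisely the hole-free property~(\ref{holefree}) enjoyed by the integrally convex set $\domZ f$; combined with the standing assumption that $\domZ f \neq \emptyset$, this yields~(\ref{Fcond1}). Finally, condition~(\ref{Fcond3}) is nothing other than Theorem~\ref{THsubgrIC}, which guarantees $\subgZ f(x) \neq \emptyset$ for every $x \in \domZ f$.

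In this sense the lemma is assembled entirely from earlier results, and I anticipate no genuine obstacle at this stage. The only substantive ingredient is the integral subdifferentiability of Theorem~\ref{THsubgrIC}, whose proof carries the real difficulty; conditions~(\ref{Fcond1}) and~(\ref{Fcond2}) are routine consequences of Proposition~\ref{PRpolyhedICset} together with the hole-free property, requiring only the remark that a polyhedron is closed and that an integer polyhedron with $\{-1,0,+1\}$ face directions is rationally-polyhedral.
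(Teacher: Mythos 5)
Your proposal is correct and follows essentially the same route as the paper: Proposition~\ref{PRpolyhedICset} gives the integer (hence closed and rational) polyhedral structure for (\ref{Fcond2}) and the identification ${\rm cl}(\overline{\domZ f}) = \overline{\domZ f}$, the hole-free property (\ref{holefree}) gives (\ref{Fcond1}), and Theorem~\ref{THsubgrIC} gives (\ref{Fcond3}). The paper phrases the rationality argument via the $\{-1,0,+1\}$-coefficient equations describing the affine hulls of facets rather than via vertices and edge directions, but this is an immaterial difference.
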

\begin{proof}
Since  $S = \domZ f$ is integrally convex,
$\overline{S}$ is an integer polyhedron by Proposition \ref{PRpolyhedICset}.
In particular, we have
${\rm cl}(\overline{S}) = \overline{S}$.
The condition (\ref{Fcond1}) is satisfied by (\ref{holefree}).
The property (\ref{Fcond2}) can be shown as follows.
By Proposition \ref{PRpolyhedICset},
the smallest affine subspace containing a facet $F$ of $\overline{S}$
is described by a system of equations, say,  $A_{F} x = b_{F}$
with the entries of $A_{F}$ belonging to $\{ -1,0,+1 \}$
and $b_{F}$ being an integer vector.
This implies the rationality  (\ref{Fcond2}).
The property (\ref{Fcond3}) is shown in Theorem~\ref{THsubgrIC}.
\end{proof}

By combining Lemmas \ref{LMsubgext} and \ref{LMcond123IC}, 
we obtain the following statements about 
the integral subdifferential, integral conjugate, and integral biconjugate
of an integer-valued integrally convex function.

\begin{proposition}    \label{PRsubgextIC}
For an integer-valued integrally convex function 
$f: \mathbb{Z}^{n} \to \mathbb{Z} \cup \{+\infty\}$,
we have the properties {\rm (1)} to {\rm (5)} in Lemma {\rm \ref{LMsubgext}}.
\finbox
\end{proposition}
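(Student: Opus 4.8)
The plan is to obtain Proposition~\ref{PRsubgextIC} as a direct corollary of the two preceding lemmas, so the proof reduces to checking that the hypotheses line up. First I would invoke Lemma~\ref{LMcond123IC}, which guarantees that an integer-valued integrally convex function $f$ satisfies all three of the conditions (\ref{Fcond1}), (\ref{Fcond2}), and (\ref{Fcond3}). These are precisely the standing assumptions of Lemma~\ref{LMsubgext}.

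Once the three conditions are in place, I would simply apply Lemma~\ref{LMsubgext} to $f$, reading off its five conclusions about $\domZ f\sp{\bullet}$, $\domZ f\sp{\bullet\bullet}$, the biconjugate values $f\sp{\bullet\bullet}(x)$, and the subdifferentials of $f$ and $f\sp{\bullet}$. Since Lemma~\ref{LMsubgext} asserts statements (1) through (5) for \emph{any} function meeting (\ref{Fcond1})--(\ref{Fcond3}), the same statements hold for our $f$ verbatim, which is exactly the content of the proposition.

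It is worth emphasizing where the genuine work lives, since the proposition itself presents no new obstacle. The substantive step behind condition (\ref{Fcond3}) is the integral subdifferentiability established in Theorem~\ref{THsubgrIC}, whose proof (deferred to Section~\ref{SCproofsubgr}) is the technical heart of the paper. Conditions (\ref{Fcond1}) and (\ref{Fcond2}) rest on the polyhedral and integrality structure of $\overline{\domZ f}$ supplied by Proposition~\ref{PRpolyhedICset}, namely that the convex hull of an integrally convex set is an integer polyhedron whose faces have affine hulls spanned by $\{ -1,0,+1 \}$-vectors. Thus the only thing to verify here is that these earlier results assemble into the hypotheses of Lemma~\ref{LMsubgext}, and Lemma~\ref{LMcond123IC} records exactly this assembly; I expect no difficulty beyond correctly quoting the cited results.
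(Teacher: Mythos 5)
Your proposal is correct and coincides with the paper's own argument: the paper states Proposition~\ref{PRsubgextIC} precisely as the combination of Lemma~\ref{LMcond123IC} (which verifies conditions (\ref{Fcond1})--(\ref{Fcond3}) for integrally convex $f$) with Lemma~\ref{LMsubgext}. Your added remarks correctly locate the real work in Theorem~\ref{THsubgrIC} and Proposition~\ref{PRpolyhedICset}, just as the paper does.
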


The integral biconjugacy claimed in Proposition \ref{PRsubgextIC}
deserves a separate statement as a theorem.

\begin{theorem}[Integral biconjugacy]  \label{THbiconjIC}
For an integer-valued integrally convex function 
$f: \ZZ^{n} \to \ZZ \cup \{ +\infty \}$
we have
$f\sp{\bullet\bullet}(x) =f(x)$ for all $x \in \ZZ\sp{n}$.
\finbox
\end{theorem}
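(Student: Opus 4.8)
The plan is to obtain the statement as an immediate consequence of the machinery assembled just above it in Proposition~\ref{PRsubgextIC}. Indeed, the assertion $f\sp{\bullet\bullet}(x) = f(x)$ for all $x \in \ZZ\sp{n}$ is precisely property~(3) of Lemma~\ref{LMsubgext}, which Proposition~\ref{PRsubgextIC} guarantees for every integer-valued integrally convex $f$. Strictly speaking, then, nothing remains to be done once Proposition~\ref{PRsubgextIC} is in hand; the theorem simply isolates the biconjugacy conclusion for emphasis. Still, it is worth tracing the logical chain to see where the genuine content resides.

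The bridge from subdifferentiability to biconjugacy is Lemma~\ref{LMbiconjsubg}, which tells us that $f\sp{\bullet\bullet}(x) = f(x)$ holds exactly at those $x \in \domZ f$ for which $\subgZ f(x) \neq \emptyset$. Combined with the integral subdifferentiability established in Theorem~\ref{THsubgrIC}, this already settles the equality at every point of the effective domain. The remaining points $x \notin \domZ f$, where $f(x) = +\infty$, are the crux: I would need to rule out that $f\sp{\bullet\bullet}(x)$ is finite there, i.e.\ to prove the containment $\domZ f\sp{\bullet\bullet} \subseteq \domZ f$. As Remark~\ref{RMfbbf} warns, this cannot be taken for granted---without a polyhedrality hypothesis on the closed convex hull of $\domZ f$, the integral biconjugate may have a strictly larger effective domain than $f$ itself, even when integral subgradients exist everywhere.

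This is exactly where condition~(\ref{Fcond2}) enters, and the argument is completed by verifying the three conditions~(\ref{Fcond1})--(\ref{Fcond3}) and invoking Lemma~\ref{LMsubgext}. Concretely, Lemma~\ref{LMcond123IC} supplies these: the hole-free identity~(\ref{holefree}) for the integrally convex set $\domZ f$ gives~(\ref{Fcond1}) (using that $\overline{\domZ f}$ is closed, so ${\rm cl}(\overline{\domZ f}) = \overline{\domZ f}$); Proposition~\ref{PRpolyhedICset}, which asserts that $\overline{\domZ f}$ is an integer polyhedron whose faces are cut out by $\{-1,0,+1\}$-coefficient systems, yields the rational polyhedrality~(\ref{Fcond2}); and Theorem~\ref{THsubgrIC} furnishes~(\ref{Fcond3}). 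Feeding these into Lemma~\ref{LMsubgext} then delivers $\domZ f\sp{\bullet\bullet} = \domZ f$ (part~(2)) together with the pointwise equality (part~(3)), which is the claim. I expect the only genuinely delicate ingredient to be the polyhedral structure of the effective domain provided by Proposition~\ref{PRpolyhedICset}: once that $\{-1,0,+1\}$-face description is available, everything downstream is a routine application of the conjugacy lemmas, and the out-of-domain points---the real reason biconjugacy is not automatic from subdifferentiability alone---are handled precisely by the resulting rational polyhedrality.
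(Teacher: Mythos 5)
Your proposal is correct and follows exactly the paper's own route: the theorem is obtained as property~(3) of Lemma~\ref{LMsubgext} via Proposition~\ref{PRsubgextIC}, with the three conditions (\ref{Fcond1})--(\ref{Fcond3}) supplied by Lemma~\ref{LMcond123IC} (resting on Proposition~\ref{PRpolyhedICset} and Theorem~\ref{THsubgrIC}). You also correctly identify why the points outside $\domZ f$ are the real issue and why the rational polyhedrality condition (\ref{Fcond2}) is what handles them, which matches the paper's discussion in Remark~\ref{RMfbbf}.
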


The following example shows that
integral biconjugacy is not guaranteed
without the assumption of integral convexity.

\begin{example}[{\cite[Example 1.1]{Mdca98}}]  \rm \label{EXla1cont}
In Example \ref{EXla1}, $D = \domZ f$ is not an integrally convex set,
and therefore $f$ is not integrally convex. 
The integral conjugate of $f$ is given as 
\[
f^{\bullet}(p) = \max \{ 0, |p_{1}+p_{2}-1|, |p_{2}+p_{3}-1|, |p_{3}+p_{1}-1| \}
\]
and the integral biconjugate is
$f^{\bullet\bullet}(x) = \sup_{p \in \mathbb{Z}^3} \{ \langle p, x \rangle - f^{\bullet}(p) \}$.
Hence 
\[
f^{\bullet\bullet}(\veczero) = - \inf_{p \in \mathbb{Z}^3} \max \{ 0, |p_{1}+p_{2}-1|, |p_{2}+p_{3}-1|, |p_{3}+p_{1}-1| \}.
\]
Therefore we have $f^{\bullet\bullet}(\veczero) = -1 \neq 0 = f(\veczero)$. 
This shows $f^{\bullet\bullet} \neq f$.
\finbox
\end{example}

As special cases of Theorem~\ref{THbiconjIC}
we obtain integral biconjugacy 
for {\rm L}-convex,
${\rm L}^{\natural}$-convex,
{\rm M}-convex,  
${\rm M}^{\natural}$-convex,  
${\rm L}^{\natural}_{2}$-convex, and 
${\rm M}^{\natural}_{2}$-convex functions
given in \cite[Theorems 8.12, 8.36, 8.46]{Mdcasiam}.
Integral biconjugacy for BS-convex and UJ-convex functions 
are also obtained as a corollary of Theorem~\ref{THbiconjIC}.

\begin{corollary}  \label{CObiconjBSUJ}
\quad

\noindent
{\rm (1)}
For an integer-valued BS-convex function $f$, 
we have
$f\sp{\bullet\bullet}(x) =f(x)$ for all $x \in \ZZ\sp{n}$.

\noindent
{\rm (2)}
For an integer-valued UJ-convex function $f$, 
we have
$f\sp{\bullet\bullet}(x) =f(x)$ for all $x \in \ZZ\sp{n}$.
\end{corollary}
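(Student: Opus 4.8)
The plan is to obtain both parts as direct specializations of Theorem~\ref{THbiconjIC}, exploiting the fact that BS-convexity and UJ-convexity are subclasses of integral convexity. The only substantive ingredient is this containment itself, and it is already available to us: as recalled in Section~\ref{SCintro}, Fujishige \cite{Fuj14bisubmdc} has shown that every BS-convex function and every UJ-convex function is integrally convex. Thus the corollary requires no new structural analysis beyond recognizing which umbrella class these functions fall under.

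For part (1), I would argue as follows. Let $f$ be an integer-valued BS-convex function. By the containment just recalled, $f$ is integrally convex. Theorem~\ref{THbiconjIC} then applies to $f$ verbatim and yields $f\sp{\bullet\bullet}(x) = f(x)$ for all $x \in \ZZ\sp{n}$, which is exactly the assertion. Part (2) is proved by the identical argument, with ``BS-convex'' replaced throughout by ``UJ-convex''. This mirrors the way Corollary~\ref{COsubgrBSUJ} is deduced from Theorem~\ref{THsubgrIC}, the two corollaries being the subgradient and biconjugacy faces of the same specialization.

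I expect no genuine obstacle, since the corollary is a pure inheritance from the integrally convex case. The one point worth flagging is why one should route the proof directly through Theorem~\ref{THbiconjIC} rather than through subgradients. The weaker path—combining Corollary~\ref{COsubgrBSUJ} (integral subdifferentiability of $f$) with Lemma~\ref{LMbiconjsubg}—would establish only the pointwise identity $f\sp{\bullet\bullet}(x) = f(x)$ for $x \in \domZ f$, and would not by itself control $\domZ f\sp{\bullet\bullet}$; indeed Remark~\ref{RMfbbf} shows that integral subdifferentiability alone does not force $f\sp{\bullet\bullet} = f$ on all of $\ZZ\sp{n}$, since the domain can enlarge under biconjugation. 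Appealing instead to Theorem~\ref{THbiconjIC}, whose proof already absorbs the requisite domain conditions (\ref{Fcond1}) and (\ref{Fcond2}) via Lemma~\ref{LMcond123IC}, sidesteps this gap and delivers the full statement on the entire lattice in a single step.
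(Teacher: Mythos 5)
Your proposal is correct and is exactly the paper's (implicit) argument: BS-convex and UJ-convex functions are integrally convex, as recalled from Fujishige's work, so Theorem~\ref{THbiconjIC} applies verbatim. Your additional observation---that routing through Corollary~\ref{COsubgrBSUJ} and Lemma~\ref{LMbiconjsubg} alone would only give the identity on $\domZ f$ and not control $\domZ f\sp{\bullet\bullet}$---is accurate and consistent with Remark~\ref{RMfbbf}, but the paper does not belabor this point.
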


\subsection{Discrete DC programming}
\label{SCdcprog}

A discrete analogue of 
the theory of DC functions (difference of two convex functions) 
and DC programming has recently been proposed in \cite{MM15dcprog}
using \Lnat-convex and \Mnat-convex functions.
As already noted in \cite[Remark 4.7]{MM15dcprog},
such  theory of discrete DC functions can in fact be developed for functions
that satisfy integral biconjugacy and integral subdifferentiability.
Our present results, Theorems \ref{THsubgrIC} and \ref{THbiconjIC},
enable us to extend the theory of discrete DC functions 
to integrally convex functions.
In particular,
an analogue of 
the Toland--Singer duality~\cite{Sin79,Tol79}
can be established for integrally convex functions as a corollary of our results.

\begin{theorem}[Toland--Singer duality] \label{THtolandsinger}
Let $g$ and $h$ be integer-valued integrally convex functions.%
\footnote{
As the proof shows, the integral convexity of $g$ is not needed.
That is, (\ref{tolandsingerduality}) holds for 
any  $g: \mathbb{Z}^{n} \to \mathbb{Z} \cup \{+\infty\}$,
as long as $h: \mathbb{Z}^{n} \to \mathbb{Z} \cup \{+\infty\}$ is integrally convex.
} 
Then
\begin{align} \label{tolandsingerduality}
\inf_{x \in \mathbb{Z}^{n}} \{ g(x) - h(x) \} 
= \inf_{p \in \mathbb{Z}^{n}} \{ h^{\bullet}(p) - g^{\bullet}(p) \} . 
\end{align}
\end{theorem}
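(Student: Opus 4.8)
The plan is to prove the Toland--Singer duality (\ref{tolandsingerduality}) by a direct manipulation of infima, using the integral biconjugacy of $h$ as the essential ingredient. The key observation is that for an integrally convex $h$ we have $h = h^{\bullet\bullet}$ by Theorem~\ref{THbiconjIC}, which lets us rewrite $h(x)$ as a supremum. First I would expand the left-hand side using $h = h^{\bullet\bullet}$:
\begin{align*}
\inf_{x \in \mathbb{Z}^{n}} \{ g(x) - h(x) \}
= \inf_{x \in \mathbb{Z}^{n}} \{ g(x) - h^{\bullet\bullet}(x) \}
= \inf_{x \in \mathbb{Z}^{n}} \Big\{ g(x) - \sup_{p \in \mathbb{Z}^{n}} ( \langle p, x \rangle - h^{\bullet}(p) ) \Big\} .
\end{align*}
Moving the inner supremum out as an infimum over $p$, the expression becomes a double infimum over $x$ and $p$ of $g(x) - \langle p, x \rangle + h^{\bullet}(p)$, and interchanging the order of the two infima (which is always valid, requiring no concavity/convexity hypotheses since we are swapping two \emph{infima}) gives
\begin{align*}
\inf_{p \in \mathbb{Z}^{n}} \inf_{x \in \mathbb{Z}^{n}} \{ g(x) - \langle p, x \rangle + h^{\bullet}(p) \}
= \inf_{p \in \mathbb{Z}^{n}} \Big\{ h^{\bullet}(p) - \sup_{x \in \mathbb{Z}^{n}} ( \langle p, x \rangle - g(x) ) \Big\}
= \inf_{p \in \mathbb{Z}^{n}} \{ h^{\bullet}(p) - g^{\bullet}(p) \} ,
\end{align*}
which is exactly the right-hand side. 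This explains the footnote remark that only the integral convexity of $h$ is used: the step $h = h^{\bullet\bullet}$ invokes Theorem~\ref{THbiconjIC} for $h$, whereas $g$ enters only through its conjugate $g^{\bullet}$, whose definition (\ref{conjvexZpZ}) requires no structural assumption on $g$.

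The steps I would carry out are therefore: (i) substitute $h = h^{\bullet\bullet}$ via Theorem~\ref{THbiconjIC}; (ii) write out $h^{\bullet\bullet}(x)$ using the definition of the integral conjugate, turning $-h^{\bullet\bullet}(x)$ into $\inf_{p}\{h^{\bullet}(p) - \langle p,x\rangle\}$; (iii) combine the two infima and swap their order; (iv) recognize the resulting inner infimum over $x$ as $-g^{\bullet}(p)$ by the definition (\ref{conjvexZpZ}) of $g^{\bullet}$.

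The main obstacle I anticipate is handling the arithmetic of $+\infty$ carefully so that the formal manipulations above are legitimate: the quantities $g$, $h$, $h^{\bullet}$, and $g^{\bullet}$ all take values in $\mathbb{Z} \cup \{+\infty\}$, so expressions such as $g(x) - h^{\bullet\bullet}(x)$ or $h^{\bullet}(p) - g^{\bullet}(p)$ may involve differences of the form $(+\infty) - (+\infty)$, and the interchange of suprema and infima together with the sign change in moving $\sup$ inside a subtraction must be justified under the usual conventions for the extended reals. One should confirm that $\domZ h^{\bullet}$ is nonempty (which follows from Lemma~\ref{LMsubgext}(1) applied to $h$, valid since $h$ is integrally convex by Lemma~\ref{LMcond123IC}) so that the right-hand side is a genuine infimum over a nonempty set, and check that degenerate cases (for instance when $\inf\{g-h\} = -\infty$ or $+\infty$) are consistent on both sides. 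Once the extended-arithmetic bookkeeping is in place, the core of the argument is purely the biconjugacy substitution and the elementary interchange of two infima, so no deeper structural property of integrally convex functions is needed beyond Theorem~\ref{THbiconjIC}.
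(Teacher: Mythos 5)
Your proposal is correct and follows essentially the same route as the paper's own proof: substitute $h = h^{\bullet\bullet}$ via Theorem~\ref{THbiconjIC}, expand $h^{\bullet\bullet}$ by its definition, interchange the two infima, and recognize the inner infimum as $-g^{\bullet}(p)$. Your additional care about the extended-real arithmetic and the nonemptiness of $\domZ h^{\bullet}$ is a reasonable refinement of bookkeeping that the paper leaves implicit, but it does not change the argument.
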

\begin{proof}
By integral biconjugacy (Theorem~\ref{THbiconjIC}) of $h$,
we can prove (\ref{tolandsingerduality}) as follows:
\begin{align*}
& \inf_x \{ g(x) - h(x) \} = \inf_x \{ g(x) - h^{\bullet\bullet}(x) \} 
        = \inf_x \{ g(x) - \sup_p \{ \left<p,x\right> - h^{\bullet}(p) \} \} 
\\
  &= \inf_x \inf_p \{ g(x) - \left<p, x\right> + h^{\bullet}(p) \} 
= \inf_p \{ h^{\bullet}(p) - \sup_x \{ \left<p, x\right> - g(x) \} \} 
\\
& = \inf_p \{ h^{\bullet}(p) - g^{\bullet}(p) \} .
\end{align*}
\vspace{-2\baselineskip} \\
\end{proof}


\section{Proofs}
\label{SCproof}

\subsection{Proof of Proposition \ref{PRpolyhedICset} about the convex hull}
\label{SCproofpolyh}

We start with a basic fact, which will be intuitively obvious.
\begin{lemma}  \label{LMhullICclosed}
The convex hull $\overline{S}$ of an integrally convex set $S$ is a closed set.
\end{lemma}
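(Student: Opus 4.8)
The plan is to exploit integral convexity to \emph{localize}: although the convex hull of an infinite set of integer points need not be closed in general (compare the non-integrally-convex set in Remark~\ref{RMfbbf}), integral convexity guarantees that every point of $\overline{S}$ already lies in the convex hull of the finitely many points of $S$ in its integral neighborhood. I would prove closedness by showing ${\rm cl}(\overline{S}) \subseteq \overline{S}$, i.e.\ that any limit point of $\overline{S}$ belongs to $\overline{S}$. So first I would fix $x^{*} \in {\rm cl}(\overline{S})$ and choose a sequence $x^{(k)} \in \overline{S}$ with $x^{(k)} \to x^{*}$.

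The heart of the argument is to confine the whole tail of this sequence to a single bounded polytope. Let $T = \{ z \in \ZZ^{n} \mid \| x^{*} - z \|_{\infty} < 2 \}$, which is a finite set. For every $k$ with $\| x^{(k)} - x^{*} \|_{\infty} < 1$, the triangle inequality gives $N(x^{(k)}) \subseteq T$, since any $z \in N(x^{(k)})$ satisfies $\| x^{*} - z \|_{\infty} \leq \| x^{*} - x^{(k)} \|_{\infty} + \| x^{(k)} - z \|_{\infty} < 2$. Now integral convexity of $S$, applied to $x^{(k)} \in \overline{S}$, yields $x^{(k)} \in \overline{S \cap N(x^{(k)})}$; combined with $N(x^{(k)}) \subseteq T$ and monotonicity of the convex-hull operation, this gives $x^{(k)} \in \overline{S \cap T}$ for all large $k$.

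Finally, $S \cap T$ is a finite set, so $\overline{S \cap T}$ is a (compact) polytope and hence closed. Passing to the limit, $x^{*} = \lim_{k} x^{(k)} \in \overline{S \cap T} \subseteq \overline{S}$, which is exactly what we want. I do not expect a genuine obstacle here; the one point that requires care is the localization step, namely recognizing that integral convexity lets us replace the a priori infinite set $S$ by the fixed finite set $S \cap T$ \emph{uniformly} along the tail of the sequence, thereby reducing the claim to the elementary fact that the convex hull of finitely many points is closed. (One should also check that no circularity arises from the definition of integral convexity referring to $\overline{S}$: we use only the stated implication $x \in \overline{S} \Rightarrow x \in \overline{S \cap N(x)}$, with no assumption that $\overline{S}$ is closed.)
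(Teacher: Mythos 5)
Your proof is correct and follows essentially the same route as the paper: use integral convexity to place each term of the approximating sequence in the convex hull of a finite subset of $S$, observe that such a hull is closed, and conclude that the limit lies in $\overline{S}$. The only (harmless) difference is technical: you bound all the relevant integral neighborhoods by one fixed finite set $T$ along the tail of the sequence, whereas the paper instead passes to a subsequence on which $N(x_k)$ is a single constant set $N_{*}$.
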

\begin{proof}
Take any point $x$ in the (topological) closure of $\overline{S}$.
There exists a sequence 
$\{ x_{k} \} \subseteq \overline{S}$ that converges to $x$.
We may assume that $N(x) \subseteq N(x_{k})$ holds for all $k$,
by considering a subsequence consisting of $\{ x_{k} \}$ with
$\| x_{k} - x \|_{\infty} < \varepsilon$ for a sufficiently small $\varepsilon >0$.
We may further assume that $N(x_{k})$  is identical for all $k$,
since there are finitely many possibilities of the set $N(x_{k})$  
and we can choose an appropriate subsequence of $\{ x_{k} \}$.
Let $N_{*}$ denote this $N(x_{k})$.
Since $S$ is integrally convex and 
$x_{k} \in \overline{S}$,
we have
$x_{k} \in 
\overline{S \cap N(x_{k})} = \overline{S \cap N_{*}}$.
Here $\overline{S \cap N_{*}}$ is a closed set, since $S \cap N_{*}$ is a finite set. 
Therefore, $x = \lim_{k} x_{k} \in \overline{S \cap N_{*}} \subseteq \overline{S}$.
\end{proof}

Let $S \subseteq \ZZ\sp{n}$ be an integrally convex set,
and $F$ be a face of its convex hull $\overline{S}$.
Let $L_{F}$ denote the linear subspace of $\RR\sp{n}$ 
such that the smallest affine subspace containing $F$
is represented as $ x + L_{F}$ for a point $x$ in $F$.
In the following we prove Proposition \ref{PRpolyhedICset} by showing that
(1) $F$ contains an integer point,
(2) $L_{F}$ is spanned by vectors in $\{-1,0,+1\}\sp{n}$, and
(3) $\overline{S}$ is a polyhedron.

\medskip

Proof of (1):
Take any $x \in \RR\sp{n}$ in $F$.
By the integral convexity of $S$, we have
$x \in \overline{S \cap N(x)}$.  That is,
there exist integer points $y\sp{(1)}, y\sp{(2)}, \ldots, y\sp{(m)} \in S \cap N(x)$
and  $\lambda_{1}, \lambda_{2}, \ldots, \lambda_{m} > 0$ such that
$ x = \sum_{k=1}\sp{m} \lambda_{k}y\sp{(k)}$
and $\sum_{k=1}\sp{m} \lambda_{k} = 1$.
Here we have
$y\sp{(1)}, y\sp{(2)},  \allowbreak \ldots,   \allowbreak y\sp{(m)} \in F$,
since $F$ is a face of $\overline{S}$, $x \in F$, and
$y\sp{(1)}, y\sp{(2)},  \ldots,  y\sp{(m)} \in \overline{S}$.

Proof of (2):
Fix $x \in F \cap \ZZ\sp{n}$.
We shall show that there exist $d^{(1)}, d^{(2)}, \ldots, d^{(h)} \in \{-1,0,+1\}\sp{n}$
such that
\begin{equation}  \label{prfFAC1}
 F = (x + \mbox{span}\{ d^{(1)}, d^{(2)},\ldots, d^{(h)} \}) \cap \overline{S},
\end{equation}
where
$\mbox{span}\{ \cdots \}$
means the subspace spanned by the vectors in the braces.
We assume that $F$ is not a singleton,
since otherwise (\ref{prfFAC1}) is trivially true.
Take any $y \in F \setminus \{ x \}$ and
define
$z = (1-\varepsilon)x + \varepsilon y$
with a sufficiently small $\varepsilon > 0$
so that $x \in N(z)$.
Since $z \in \overline{S}$
and $S$ is integrally convex,
there exist $z\sp{(1)}, z\sp{(2)}, \ldots, z\sp{(m)} \in S \cap N(z)$
and  $\lambda_{1}, \lambda_{2}, \ldots, \lambda_{m} > 0$ such that
$ z = \sum_{k=1}\sp{m} \lambda_{k}z\sp{(k)}$ and
$\sum_{k=1}\sp{m} \lambda_{k} = 1$.
Here we have
$z\sp{(1)}, z\sp{(2)}, \ldots, z\sp{(m)} \in F$, since $F$ is a face of $\overline{S}$,
$z \in F$,
and $z\sp{(1)}, z\sp{(2)}, \ldots, z\sp{(m)} \in \overline{S}$.
It follows from 
 $(1-\varepsilon)x + \varepsilon y = z = \sum_{k=1}\sp{m} \lambda_{k}z\sp{(k)}$
that
\[
 y = x + \frac{1}{\varepsilon} \sum_{k=1}\sp{m} \lambda_{k}(z\sp{(k)} - x),
\]
where each direction vector
$z\sp{(k)} - x$ belongs to $\{-1,0,+1\}\sp{n}$,
since both $z\sp{(k)}$ and $x$ are members of $N(z)$.
By collecting all the direction vectors
$z\sp{(k)} - x$ arising from all choices of $y \in F \setminus \{ x \}$
we obtain a set of vectors $\{ d^{(1)}, d^{(2)}, \ldots, d^{(h)} \}  \subseteq \{-1,0,+1\}\sp{n}$ 
for which (\ref{prfFAC1}) holds.

Proof of (3):
First suppose that $\overline{S}$ is full dimensional.
For a facet $F$ of $\overline{S}$,
the linear subspace $L_{F}$ is a hyperplane of dimension $n-1$,
and is described by an (outward) normal vector.
The normal vector is perpendicular to 
$(n-1)$ linearly independent direction vectors generating $L_{F}$
and is uniquely determined under some appropriate normalization of the length.
Since the direction vectors are contained in $\{-1,0,+1\}\sp{n}$ 
by (\ref{prfFAC1}),  there exist only a finite number of possible normal vectors, and hence 
$\overline{S}$ has a finite number of facets.
If $\overline{S}$ is not full dimensional,
we consider normal vectors of its facets contained in 
the subspace $L_{\overline{S}}$.
There are only a finite number of such normal vectors, up to scaling.
Therefore, $\overline{S}$ is a polyhedron.


\subsection{Proof of Theorem~\ref{THsubgrIC} for integral subdifferentiability}
\label{SCproofsubgr}

Let 
$f: \mathbb{Z}^{n} \to \mathbb{Z} \cup \{ +\infty  \}$
be an integer-valued integrally convex function.
For a point $x \in \domZ f$, 
the subdifferential of $f$ at $x$
is defined as 
\begin{equation} \label{subgZRdef}
 \subgR f(x)
= \{ p \in  \RR\sp{n} \mid    
  f(y) - f(x)  \geq  \langle p, y - x \rangle     \ \mbox{for all }  y \in \ZZ\sp{n} \}.
\end{equation}
The subdifferential $\subgR f(x)$ is nonempty for every $x \in \domZ f$, 
since an integrally convex function is extensible to a convex function.
In the following we prove that $\subgR f(x)$ contains an integer vector,
which is the claim of Theorem~\ref{THsubgrIC}.

We may assume that $x = \veczero$ and $f(\veczero) = 0$.
In the definition of  $\subgR f(\veczero)$ by (\ref{subgZRdef}),
it suffices, by Theorem~\ref{THintcnvlocopt},
to consider  $y$ in $\{-1,0,+1\}^n$.
Therefore, we have
\begin{equation} \label{subgZR0def}
 \subgR f(\veczero)
= \{ p \in  \RR\sp{n} \mid    
  \sum_{j=1}\sp{n} y_{j} p_{j} \leq f(y)   \ \mbox{for all }  y \in \{ -1,0,+1 \}\sp{n} \} .
\end{equation}
We represent the system of inequalities 
$\sum_{j=1}\sp{n} y_{j} p_{j} \leq f(y)$ 
for $y$ with $f(y) < +\infty$
in a matrix form as
\begin{equation}\label{ineqApb}
 A p \leq b.
\end{equation}
Let $I$ denote the row set of $A$ and $A = ( a_{ij} \mid i \in I, j \in \{ 1,2,\ldots, n \})$.
We denote the $i$th row vector of $A$ by $a_{i}$ for $i \in I$.
The row set $I$ is 
indexed by $y \in \{ -1,0,+1 \}\sp{n}$ with $f(y) < +\infty$,
and 
$a_{i}$ is equal to the corresponding $y$ for $i \in I$;
we have $a_{ij} = y_{j}$ for $j=1,2,\ldots, n$ and $b_{i}= f(a_{i})$.
Note that $a_{ij} \in \{ -1,0,+1 \}$
and $a_{i} \in \{ -1,0,+1 \}\sp{n}$ for all $i$ and $j$.

We apply the Fourier--Motzkin elimination procedure \cite{Sch86} to 
the system of inequalities (\ref{ineqApb})
to show the existence of an integer vector satisfying
(\ref{ineqApb}).

The Fourier--Motzkin elimination for (\ref{ineqApb}) goes as follows.
According to the value of the coefficient $a_{i1}$ of the first variable $p_{1}$,
we partition $I$ into three disjoint parts $(I_{1}^{+},I_{1}^{0},I_{1}^{-})$ as
\begin{align*}
 I_{1}^{+} &= \{ i \in I \mid a_{i1} = +1 \},   
\\ 
 I_{1}^{0} &= \{ i \in I \mid a_{i1} = 0 \},   
\\
 I_{1}^{-} &= \{ i \in I \mid a_{i1} = -1 \},
\end{align*}
and decompose (\ref{ineqApb}) into three parts as
\begin{align}
 a_{i}  p \leq b_{i} &
\qquad (i \in I_{1}^{+}),
\label{ineqApbi+}
\\
 a_{i}  p \leq b_{i} &
\qquad (i \in I_{1}^{0}) ,
\label{ineqApbi0}
\\
 a_{i}  p \leq b_{i} &
\qquad (i \in I_{1}^{-}) .
\label{ineqApbi-}
\end{align}
For all possible combinations of $i \in I_{1}^{+}$ and $k \in I_{1}^{-}$,
we add the inequality for $i$ in (\ref{ineqApbi+}) and the inequality for $k$ in (\ref{ineqApbi-}) 
to obtain
\begin{equation}\label{FMp2pn}
 (a_{i} + a_{k}) p \leq b_{i} + b_{k} 
\qquad (i \in I_{1}^{+},\; k \in I_{1}^{-}) .
\end{equation}
The inequalities in  (\ref{FMp2pn}) are free from the variable $p_{1}$,
since $a_{i1} + a_{k1}= 0$ for all
$i \in I_{1}^{+}$ and $k \in I_{1}^{-}$.
For the variable $p_{1}$ we obtain
\begin{equation}\label{FMp1}
 \max_{k \in I_{1}^{-}} \left\{ \sum_{j=2}^n a_{kj}p_{j} - b_{k} \right\} 
 \leq p_{1} \leq  
 \min_{i \in I_{1}^{+}} \left\{ b_{i} - \sum_{j=2}^n a_{ij}p_{j}  \right\} 
\end{equation}
from (\ref{ineqApbi+}) and (\ref{ineqApbi-}).
It is understood that
the maximum over the empty set is equal to $-\infty$ 
and the minimum over the empty set is equal to $+\infty$.

We have thus eliminated $p_{1}$ and obtained
a system of inequalities 
in $(p_{2},\ldots,p_{n})$ consisting of (\ref{ineqApbi0}) and (\ref{FMp2pn}).
Once $(p_{2},\ldots,p_{n})$ is found,
$p_{1}$ can easily be found from (\ref{FMp1}),
if the interval described by (\ref{FMp1}) is nonempty.
It is important that the derived system of inequalities 
in $(p_{1}, p_{2},\ldots,p_{n})$ 
consisting of  (\ref{ineqApbi0}), (\ref{FMp2pn}), and (\ref{FMp1})
is in fact equivalent to the original system consisting of
(\ref{ineqApbi+}), (\ref{ineqApbi0}), and (\ref{ineqApbi-}).
In particular, $(p_{1}, p_{2},\ldots,p_{n})$ satisfies 
(\ref{ineqApbi+}), (\ref{ineqApbi0}), and (\ref{ineqApbi-})
if and only if $(p_{2},\ldots,p_{n})$ satisfies 
(\ref{ineqApbi0}) and  (\ref{FMp2pn}),
and $p_{1}$ satisfies (\ref{FMp1}).

The Fourier--Motzkin elimination applies the above procedure 
recursively to eliminate variables $p_{1},p_{2}, \ldots,p_{n-1}$.
This process
results in a single inequality in $p_{n}$ of the form (\ref{FMp1}).
Then we can determine $(p_{1}, p_{2},\ldots,p_{n})$ 
in the reverse order $p_{n}, p_{n-1},\ldots,p_{1}$.

By virtue of the integral convexity of $f$, a drastic simplification occurs
in the elimination process. 
The inequalities (\ref{FMp2pn}) that are to be added in general 
are actually redundant and need not be added,
which is shown in the following lemma.
The lemma implies in particular that
$I_{1}^{0}$ is nonempty if 
$I_{1}^{+}$ and $I_{1}^{-}$ are nonempty.

\begin{lemma}  \label{LMelimIC}
The inequalities in {\rm (\ref{FMp2pn})} are implied by those in  {\rm (\ref{ineqApbi0})}.
\end{lemma}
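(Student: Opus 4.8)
The plan is to exploit the integral convexity of $f$ through the local inequality of Theorem~\ref{THfavtarProp33}(b), which bounds $\tilde f$ at midpoints of pairs at $\ell_{\infty}$-distance $2$. Fix a pair $i \in I_{1}^{+}$ and $k \in I_{1}^{-}$. By construction the corresponding rows are integer vectors $y := a_{i}$ and $z := a_{k}$ in $\{-1,0,+1\}^{n}$ with $y_{1} = +1$, $z_{1} = -1$, and $f(y) = b_{i} < +\infty$, $f(z) = b_{k} < +\infty$. For this pair the inequality (\ref{FMp2pn}) reads $\langle y+z, p\rangle \leq f(y)+f(z)$, and I must show it holds for every $p$ satisfying the inequalities (\ref{ineqApbi0}) indexed by $I_{1}^{0}$.

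First I would record that $\| y - z \|_{\infty} = 2$: the first coordinates differ by exactly $2$, while every other coordinate differs by at most $2$ since $y,z \in \{-1,0,+1\}^{n}$. Hence Theorem~\ref{THfavtarProp33}(b) applies to the midpoint $m := (y+z)/2$ and yields $\tilde f(m) \leq \frac{1}{2}(f(y)+f(z))$. Next I would expand $\tilde f(m)$ via its definition (\ref{fnconvclosureloc2}) as a minimizing convex combination over the integral neighborhood: there are coefficients $(\mu_{w})_{w \in N(m)} \in \Lambda(m)$ with $\sum_{w} \mu_{w} w = m$ and $\tilde f(m) = \sum_{w \in N(m)} \mu_{w} f(w)$, where only $w$ with $f(w) < +\infty$ carry positive weight. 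The crucial observation is that $m_{1} = (y_{1}+z_{1})/2 = 0$, so every $w \in N(m)$ satisfies $|w_{1} - 0| < 1$, i.e.\ $w_{1} = 0$; consequently each such $w$ of finite value indexes one of the inequalities in (\ref{ineqApbi0}), namely $\langle w, p\rangle \leq f(w)$.

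Combining these facts, for any $p$ satisfying (\ref{ineqApbi0}) I would compute
\[
\frac{1}{2}\langle y+z,\, p\rangle = \langle m,\, p\rangle = \sum_{w \in N(m)} \mu_{w}\langle w,\, p\rangle \leq \sum_{w \in N(m)} \mu_{w} f(w) = \tilde f(m) \leq \frac{f(y)+f(z)}{2},
\]
where the middle inequality uses $\mu_{w} \geq 0$ together with $\langle w, p\rangle \leq f(w)$, and the final step is Theorem~\ref{THfavtarProp33}(b). Multiplying by $2$ gives exactly (\ref{FMp2pn}) for the pair $(i,k)$, and since $i,k$ were arbitrary this proves the lemma.

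The step I expect to be the genuine crux -- and the precise reason integral convexity is indispensable -- is the containment of the support of the convex combination inside the hyperplane $\{w_{1} = 0\}$: it is exactly because $m_{1}$ is an integer (here $0$) and $N(m)$ has $\ell_{\infty}$-radius strictly less than $1$ that the decomposition of $\tilde f(m)$ never recruits a point outside $I_{1}^{0}$. In passing, this same observation shows $I_{1}^{0} \neq \emptyset$ whenever $I_{1}^{+}$ and $I_{1}^{-}$ are both nonempty, since the support is nonempty. Everything else is routine bookkeeping.
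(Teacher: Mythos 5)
Your proof is correct and follows essentially the same route as the paper's: both rewrite the added inequality as a statement about the midpoint $m=(a_i+a_k)/2$, use integral convexity (via Theorem~\ref{THfavtarProp33}(b) and the definition of $\tilde f$) to decompose $m$ as a convex combination of points of $N(m)$ with $\sum_l \lambda_l f(y^{(l)}) \le \tfrac12(f(a_i)+f(a_k))$, and observe that $m_1=0$ forces every point in the support to lie in the hyperplane $\{w_1=0\}$ and hence to index an inequality of $I_1^0$. Your closing remark that this also shows $I_1^0\neq\emptyset$ when both $I_1^+$ and $I_1^-$ are nonempty is exactly the consequence the paper draws immediately before stating the lemma.
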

\begin{proof}
In (\ref{FMp2pn})
we have
$b_{i} = f(a_{i})$ and $b_{k} = f(a_{k})$,
and hence the inequality in (\ref{FMp2pn}) can be rewritten as
\begin{equation}\label{FMp2pnvar}
 \frac{1}{2} (   a_{i}+ a_{k} ) p 
 \leq 
 \frac{1}{2} (   f(a_{i}) +f(a_{k}) ) .
\end{equation}

By the integral convexity of $f$
there exist 
$y\sp{(1)}, y\sp{(2)}, \ldots, y\sp{(m)} \in N((a_{i}+a_{k})/2)$ 
such that
\begin{equation}\label{prfFMredun1}
 \sum_{l=1}\sp{m} \lambda_{l} y\sp{(l)}
  = 
 \frac{1}{2} (   a_{i} + a_{k} ), 
\qquad
 \sum_{l=1}\sp{m} \lambda_{l} f(y\sp{(l)})
\leq
 \frac{1}{2} (   f(a_{i}) +f(a_{k}) ) ,
\end{equation}
where $\lambda_l \geq 0$ for  $l =1,2,\ldots,m$ and $\sum_{l=1}\sp{m} \lambda_l = 1$
 (cf., Theorem~\ref{THfavtarProp33}).
Since 
the first component of 
$(a_{i}+a_{k})/2$
is zero,
the first component of each $y\sp{(l)}$ must also be zero,
which means that each
$y\sp{(l)}$ coincides with $a_{j}$ for some $j=j(l) \in I_{1}^{0}$.
Hence we have
$y\sp{(l)}  p \leq f(y\sp{(l)})$
for $l =1,2,\ldots,m$
by (\ref{ineqApbi0}). 
Using this and (\ref{prfFMredun1}) we obtain
\begin{equation*}
 \frac{1}{2}(a_{i} + a_{k}) p
=   
\sum_{l=1}\sp{m} \lambda_l y\sp{(l)}  p 
\leq  
\sum_{l=1}\sp{m} \lambda_l f(y\sp{(l)}) 
\leq  \frac{1}{2} (   f(a_{i}) +f(a_{k}) ) .
\end{equation*}
The above argument shows that (\ref{FMp2pnvar}) can be derived from 
the inequalities in (\ref{ineqApbi0}).
\end{proof}

For $j=2,3,\ldots,n$, define
\begin{align*}
 I_{j}^{+} &= \{ i \in I_{j-1}^{0} \mid a_{ij} = +1 \},   
\\ 
 I_{j}^{0} &= \{ i \in I_{j-1}^{0} \mid a_{ij} = 0 \},   
\\ 
 I_{j}^{-} &= \{ i \in I_{j-1}^{0} \mid a_{ij} = -1 \}.
\end{align*}
Then the original system (\ref{ineqApb}) is equivalent to 
\begin{align}
 \max_{k \in I_{1}^{-}} \left\{ \sum_{j=2}^n a_{kj}p_{j} - b_{k} \right\} 
  \leq & \ p_{1} \leq  
 \min_{i \in I_{1}^{+}} \left\{ b_{i} - \sum_{j=2}^n a_{ij}p_{j}  \right\} ,
\nonumber \\
 \max_{k \in I_{2}^{-}} \left\{ \sum_{j=3}^n a_{kj}p_{j} - b_{k} \right\} 
  \leq & \  p_{2} \leq  
 \min_{i \in I_{2}^{+}} \left\{ b_{i} - \sum_{j=3}^n a_{ij}p_{j}  \right\} ,
\nonumber \\
  & \ \vdots 
\label{FMp12n}
\\
 \max_{k \in I_{n-1}^{-}} \left\{ a_{kn}p_{n} - b_{k} \right\} 
  \leq & \  p_{n-1} \leq  
 \min_{i \in I_{n-1}^{+}} \left\{ b_{i} - a_{in}p_{n}  \right\} ,
\nonumber \\
 \max_{k \in I_{n}^{-}} \left\{ - b_{k} \right\} 
  \leq & \ p_{n} \leq  
 \min_{i \in I_{n}^{+}} \left\{ b_{i} \right\} .
\nonumber 
\end{align}
Note that the expressions above are valid even when some of
the index sets $I_{j}^{+}$ and/or $I_{j}^{-}$ are empty.

Since $\subgR f(x)$ is nonempty,
there exists a real vector $p$ satisfying the inequalities (\ref{FMp12n}).
As for integrality, the last inequality in (\ref{FMp12n}) shows
that we can choose an integral $p_{n} \in \ZZ$,
since $b_{i} = f(a_{i})$ 
for $i \in I_{n}^{-} \cup I_{n}^{+}$
 and $f$ is integer-valued.
Then the next-to-last inequality shows that
we can choose an integral $p_{n-1} \in \ZZ$,
since
$a_{kn}p_{n} - b_{k} \in \ZZ$ for $k \in I_{n-1}^{-}$
and
$b_{i} - a_{in}p_{n} \in \ZZ$ for $i \in I_{n-1}^{+}$.
Continuing in this way we can see the existence of an integer vector $p \in \ZZ\sp{n}$
satisfying (\ref{FMp12n}).  
This shows  $\subgZ f(x) \not= \emptyset$,
completing the proof of Theorem~\ref{THsubgrIC}.

\begin{remark} \rm \label{BoundedSG}
Suppose that $\subgR f(x)$ is a bounded polyhedron for an integrally convex function 
$f: \mathbb{Z}^{n} \to \mathbb{Z} \cup \{ +\infty  \}$ and $x \in \dom f$.
The expression (\ref{FMp12n}) shows that there exists an integral vertex of  $\subgR f(x)$.
Indeed we can choose the (finite) upper bound in (\ref{FMp12n}) for each $p_i$.
It is emphasized, however, that not every vertex is an integral vector.
\finbox
\end{remark}

\section{Concluding Remarks}
\label{SCconclrem}

The established biconjugacy implies that
there is a one-to-one correspondence
between the class $\calF_{\rm IC}$
of integer-valued integrally convex functions 
and the class of their integral conjugates
$\calF\sp{\bullet}_{\rm IC} = \{ f\sp{\bullet}  \mid  f \in \calF_{\rm IC} \}$.
By the conjugacy theorems related to L- and M-convex functions
(see \cite[Fig.~8.1]{Mdcasiam}),
the class $\calF\sp{\bullet}_{\rm IC}$ also contains
separable convex,
{\rm L}-convex,
${\rm L}^{\natural}$-convex,
{\rm M}-convex,  
${\rm M}^{\natural}$-convex,  
${\rm L}^{\natural}_{2}$-convex, and 
${\rm M}^{\natural}_{2}$-convex functions.
A direct characterization of $\calF\sp{\bullet}_{\rm IC}$ is an interesting question
and is left for the future.
It will be also interesting to characterize its subclasses
$\calF\sp{\bullet}_{\rm BS} = \{  f\sp{\bullet} \mid \mbox{$f$: integer-valued BS-convex} \}$
and
$\calF\sp{\bullet}_{\rm UJ} = \{  f\sp{\bullet} \mid \mbox{$f$: integer-valued UJ-convex} \}$.

\

\noindent {\bf Acknowledgement} 
The authors thank  Satoru Fujishige and Hiroshi Hirai for helpful comments.
This work was supported by CREST, JST, 
Grant Number JPMJCR14D2, Japan, and
JSPS KAKENHI Grant Numbers 26280004, 16K00023.









\end{document}